\newtheorem{thm}{Theorem}[section]
\newtheorem{prop}[thm]{Proposition}
\newtheorem{cor}[thm]{Corollary}
\newtheorem{lem}[thm]{Lemma}
\begin{document}

\title{On Quasi-Modular Forms, Almost Holomorphic Modular Forms, and the
Vector-Valued Modular Forms of Shimura}

\author{Shaul Zemel\thanks{The initial stage of this research has been carried
out as part of my Ph.D. thesis work at the Hebrew University of Jerusalem,
Israel. The final stage of this work was carried out at TU Darmstadt and
supported by the Minerva Fellowship (Max-Planck-Gesellschaft).}}


\maketitle

\section*{Introduction}

\addcontentsline{toc}{section}{Introduction}

Modular forms have been the subject of extensive research for a very long time.
Throughout this time, many generalizations of the classical notion were defined.
Two such notions which we consider in this paper are vector-valued modular forms
with representations, and quasi-modular forms, in which the transformation under
the action of the Fuchsian group involves more functions. Holomorphic
quasi-modular forms were defined in \cite{[KZ]} and their relation to classical
holomorphic modular forms was also considered in that reference, as well as in
\cite{[MR]}, \cite{[A]}, and other references. The first aim of this paper is to
extend this relation to more general modular forms and quasi-modular forms.

One type of vector-valued modular forms with representations arises from the
symmetric powers of the natural action of $SL_{2}(\mathbb{R})$ on
$\mathbb{C}^{2}$. The $m$th symmetric power of this representation is denoted
$V_{m}$. Modular forms with representation $V_{m}$ were defined, using
expressions similar to ours, by Shimura in \cite{[Sh]}, in order to consider
the integrals defined by Eichler in \cite{[E]} (see also the work \cite{[B]} of
Bol, where a preliminary version of these modular forms already appears). A
structure theorem for cusp forms of weights 0 and 2 and representation $V_{m}$
for even $m$ was established by Kuga and Shimura in \cite{[KS]}. The main aim
of this paper is to relate (holomorphic) quasi-modular forms to holomorphic
modular forms with the representation $V_{m}$. In fact, we obtain this result in
a more general setting. Using the tools developed in order to achieve this task,
we also prove a general structure theorem for modular forms with
representations (or multiplier systems) involving $V_{m}$.

In Section \ref{MQM} we present the notions of modular forms and quasi-modular
forms, and study the relation between them under very general assumptions. In
Section \ref{Vm} we introduce modular forms with representation $V_{m}$, obtain
the connection between all three notions, and prove the structure theorem.

I wish to thank the anonymous referee for his helpful comments on this paper.

\section{Modular Forms and Quasi-Modular Forms \label{MQM}}

In this Section we present the notions of modular and quasi-modular forms, and
give the relations between them in a general setting.

\subsection{Definitions and Notation}

Let $\mathcal{H}=\{\tau\in\mathbb{C}|\Im\tau>0\}$ be the Poincar\'{e} upper
half-plane. We shall always write $\tau=x+iy$, hence $x=\Re\tau$ and
$y=\Im\tau$. The group $SL_{2}(\mathbb{R})$ acts on
$\mathcal{H}=\{\tau\in\mathbb{C}|\Im\tau>0\}$ by M\"{o}bius transformations:
\begin{equation}
\gamma=\left(\begin{array}{cc} a & b
\\ c & d\end{array}\right):\tau\mapsto\frac{a\tau+b}{c\tau+d}. \label{action}
\end{equation}
The measure $d\mu=\frac{dxdy}{y^{2}}$ is invariant under the action of
$SL_{2}(\mathbb{R})$. A discrete subgroup $\Gamma \subseteq SL_{2}(\mathbb{R})$
is called \emph{Fuchsian of the first kind} if the volume of a fundamental
domain (with respect to $d\mu$) is finite. As this is the only subgroups of
$SL_{2}(\mathbb{R})$ we consider in this paper, we shall write simply
\emph{Fuchsian}, meaning Fuchsian of the first kind, throughout. A subgroup of
trace $\pm2$ elements of $\Gamma$ is called \emph{parabolic} and corresponds to
a \emph{cusp} of $\Gamma$, i.e., to the element of $\mathbb{P}^{1}(\mathbb{R})$
which is stabilized by this subgroup. $\Gamma$ acts on its cusps via (extended)
M\"{o}bius transformations (which correspond to conjugation on the parabolic
subgroups), and the number of orbits of cusps is always finite. The quotient
space $Y(\Gamma)=\Gamma\backslash\mathcal{H}$ is a Riemann surface, which
becomes a compact Riemann surface (or algebraic curve) denoted $X(\Gamma)$ after
adding the (finitely many) equivalence classes of cusps of $\Gamma$.

The action of $SL_{2}(\mathbb{R})$ on $\mathcal{H}$ admits a \emph{factor of
automorphy}, which is defined, for any elements $\gamma \in SL_{2}(\mathbb{R})$
and $\tau\in\mathcal{H}$ as in Equation \eqref{action}, by
\begin{equation}
j(\gamma,\tau)=c\tau+d. \label{autfac}
\end{equation}
The factor of automorphy is a cocycle: The equality
\begin{equation}
j(\gamma\delta,\tau)=j(\gamma,\delta\tau)j(\delta,\tau) \label{cocycle}
\end{equation}
holds for any $\tau\in\mathcal{H}$ and $\gamma$ and $\delta$ in
$SL_{2}(\mathbb{R})$. We shall sometimes use the alternative notation
$j_{\gamma}(\tau)$ for $j(\gamma,\tau)$. The entry $c$ of the matrix $\gamma$
from Equation \eqref{action} can be written as the derivative
$j_{\gamma}'(\tau)$ of the linear function $j_{\gamma}$ from Equation
\eqref{autfac}. Since it is a constant (independent of $\tau$), we omit the
variable and write simply $j_{\gamma}'$.

Let $k$ and $l$ be integers, let $\Gamma \subseteq SL_{2}(\mathbb{R})$ be a
Fuchsian group, and let $\rho$ be a representation of $\Gamma$ on some
(finite-dimensional) complex vector space $V_{\rho}$. Throughout this paper,
when a representation $\rho$ of some group $\Gamma$ is considered, $V_{\rho}$
denotes the representation space of $\rho$. A \emph{modular form of weight
$(k,l)$ and representation $\rho$ with respect to $\Gamma$} is a real-analytic
function $f:\mathcal{H}\to\mathbb{C}$ satisfying the functional equation
\begin{equation}
f(\gamma\tau)=j(\gamma,\tau)^{k}\overline{j(\gamma,\tau)}^{\
l}\rho(\gamma)f(\tau) \label{modrep}
\end{equation}
for any $\gamma\in\Gamma$. A modular form of weight $(k,0)$ is said to have
weight $k$. By considering subgroups of the metaplectic double cover
$Mp_{2}(\mathbb{R})$ of $SL_{2}(\mathbb{R})$ (one realization of which consists
of pairs of an element $\gamma \in SL_{2}(\mathbb{R})$ and a choice of a square
root $\sqrt{j(\gamma,\tau)}$ of the automorphy factor from Equation
\eqref{autfac}), one can consider modular forms in which the weights $k$ and $l$
are half-integral. Furthermore, we can consider the case in which the weights
$k$ and $l$ are arbitrary real (and even complex) by allowing $\rho$ to be a
\emph{multiplier system} of weight $(k,l)$. We recall that such a multiplier
system is a function $\rho:\Gamma\to\mathbb{C}$ (or, more generally,
$\rho:\Gamma \to GL(V_{\rho})$), which satisfies the condition
\[j_{\gamma\delta}^{k}(\tau)\overline{j_{\gamma\delta}^{l}(\tau)}
\rho(\gamma\delta)=j_{\gamma}^{k}(\delta\tau)\overline{j_{\gamma}^{l}
(\delta\tau)}j_{\delta}^{k}(\tau)\overline{j_{\delta}^{l}(\tau)}
\rho(\gamma)\rho(\delta)\] for any $\gamma$ and $\delta$ in $\Gamma$ and
$\tau\in\mathcal{H}$ (with the appropriate choice of powers, or equivalently
logarithms, of the automorphy factor functions $j_{\gamma}$ for
$\gamma\in\Gamma$). Note that we do not require the image of $\rho$ to be
unitary, hence the 1-dimensional case of our definition covers also the case of
``generalized modular forms'' in the sense of \cite{[KM]} (but extended to
arbitrary Fuchsian groups). Multiplication by $y^{t}$ takes a modular form of
weight $(k+t,l+t)$ (and any representation or multiplier system) to a modular
form of weight $(k,l)$ (and the same representation or multiplier system). It
follows that using this operation we can always consider only modular forms of
``holomorphic'' weights. We denote the space of (real-analytic) modular forms of
weight $(k,l)$ and representation (or multiplier system) $\rho$ by
$\mathcal{M}_{k,l}^{an}(\rho)$, and write $\mathcal{M}_{k}^{an}(\rho)$ for
$\mathcal{M}_{k,0}^{an}(\rho)$. In some cases it will be useful to allow
singularities on $\mathcal{H}$, so that the space
$\mathcal{M}_{k,l}^{sing}(\rho)$ (and $\mathcal{M}_{k}^{sing}(\rho)$) consists
of those modular forms which are real-analytic on $\mathcal{H}$ except in a
discrete ($\Gamma$-invariant) set of points. The notation
$\mathcal{M}_{k}^{hol}(\rho)$ and $\mathcal{M}_{k}^{mer}(\rho)$ stands for the
spaces consisting of modular forms which are holomorphic (resp. meromorphic) on
$\mathcal{H}$. If $\Gamma$ has cusps, then elements of
$\mathcal{M}_{k}^{hol}(\rho)$ are required to be holomorphic also at the cusps.
In this case we denote the space of cusp forms (i.e., those elements of
$\mathcal{M}_{k}^{hol}(\rho)$ which vanish at the cusps) by
$\mathcal{M}_{k}^{cusp}(\rho)$. The space of meromorphic modular forms having
all their poles at the cusps (these forms are called \emph{weakly holomorphic})
is denoted $\mathcal{M}_{k}^{wh}(\rho)$. For integral $k$ (and $l$), replacing
$\rho$ by $\Gamma$ in all these notations denotes the corresponding spaces of
modular forms with trivial (1-dimensional) representation.

Another notion, namely that of a quasi-modular form, was defined first in
\cite{[KZ]} and considered, among others, in \cite{[A]} and Section 3 of
\cite{[MR]}. These references restricted attention only to holomorphic (or
meromorphic) functions. Here we introduce a more general class of functions, by
considering real-analytic functions and allowing representations and multiplier
systems. Let $k$ be a real (or even complex) number, let $d$ be a non-negative
integer, let $\Gamma$ be a Fuchsian group, and let $\rho:\Gamma \to
GL(V_{\rho})$ be a multiplier system of weight $k$. A real-analytic function
$f:\mathcal{H} \to V_{\rho}$ is \emph{quasi-modular form of weight $k$, depth
$d$, and multiplier system $\rho$ with respect to $\Gamma$} if there exist
(real-analytic) functions $f_{r}:\mathcal{H} \to V_{\rho}$, $0 \leq r \leq d$
such that
\begin{equation}
f(\gamma\tau)=\sum_{r=0}^{d}j(\gamma,\tau)^{k-r}(j_{\gamma}')^{r}\rho(\gamma)f_{
r}(\tau) \label{qmod}
\end{equation}
for any $\gamma\in\Gamma$. The functions $f_{r}$ are independent of $\gamma$,
and we assume that $f_{d}\neq0$ (otherwise the depth of $f$ is smaller than
$d$). By taking $\gamma=\binom{1\ \ 0}{0\ \ 1}$ in Equation \eqref{qmod} we
obtain (under some simple normalization assumptions on $\rho$ and the powers of
$j(\gamma,\tau)$ if $k$ is not integral) that $f_{0}=f$. We denote the space of
quasi-modular forms (with the various differential properties and growth
conditions considered above) of weight $k$ and representation $\rho$ with
respect to $\Gamma$ by $\widetilde{\mathcal{M}}_{k}^{*}(\rho)$ (with the symbol
$*$ standing for one of the superscripts $an$, $sing$, $hol$, $mer$, $cusp$, or
$wh$), and $\widetilde{\mathcal{M}}_{k}^{*,\leq d}(\rho)$ denoting those
quasi-modular forms whose depth does not exceeds $d$.

If $\Gamma$ has cusps then certain assumptions on $\rho$ allow one to consider
Fourier expansions of modular and quasi-modular forms (as well as of the
functions $f_{r}$ in the latter case) around the cusps of $\Gamma$. We shall not
use these expansions in this paper.

\medskip

The tensor product of $f\in\mathcal{M}_{k}^{*}(\rho)$ and
$g\in\mathcal{M}_{l}^{*}(\eta)$, for $\rho$ and $\eta$ appropriate multiplier
systems of the same Fuchsian group $\Gamma$, lies in
$\mathcal{M}_{k+l}^{*}(\rho\otimes\eta)$. For $*=an$ or $*=sing$ the same
statement applies for modular forms with both holomorphic and anti-holomorphic
weights. Similarly, let $f$ and $g$ be elements in
$\widetilde{\mathcal{M}}_{k}^{*,\leq m}(\rho)$ and
$\widetilde{\mathcal{M}}_{l}^{*,\leq n}(\eta)$ with corresponding functions
$f_{r}$ for $0 \leq r \leq m$ and $g_{s}$ for $0 \leq s \leq n$ respectively.
Then, the tensor product $h=f \otimes g$ lies in
$\widetilde{\mathcal{M}}_{k+l}^{*,\leq m+n}(\rho\otimes\eta)$, with the
corresponding functions $h_{t}$, $0 \leq t \leq m+n$ being $\sum_{r+s=t}f_{r}
\otimes g_{s}$. Furthermore, the (tensor) product of \emph{almost holomorphic}
functions (namely, polynomials in $\frac{1}{2iy}$ over the ring of holomorphic
functions on $\mathcal{H}$) of depths (i.e., degrees as such polynomials) at
most $m$ and $n$ is almost holomorphic of depth not exceeding $m+n$. We shall
treat both the additive and multiplicative structures together in the main
result of this paper.

\subsection{Relations between Quasi-Modular Forms and Modular Forms}

Quasi-modular forms relate to modular forms according to the property presented
in the next Proposition. This result is stated in \cite{[KZ]} and \cite{[A]} and
proved in \cite{[MR]} for holomorphic quasi-modular forms of integral weights
with trivial representation with respect to $SL_{2}(\mathbb{Z})$ (see
Proposition 132 and Remarque 133 of \cite{[MR]}). However, it holds in a more
general context, an observation which will turn out useful later. We begin with
a result which is equivalent to Lemma 119 of \cite{[MR]}:
\begin{lem}
Take $f\in\widetilde{\mathcal{M}}_{k}^{*,\leq d}(\rho)$, with corresponding
functions $f_{r}$, $0 \leq r \leq d$. Then
$f_{r}\in\widetilde{\mathcal{M}}_{k-2r}^{*,\leq d-r}(\rho)$, with the
corresponding functions being $\binom{t}{r}f_{t}$ for any $r \leq t \leq d$.
\label{qmodcomp}
\end{lem}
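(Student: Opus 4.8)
The plan is to extract the transformation law of each $f_r$ from that of $f$ by evaluating $f$ at $(\gamma\delta)\tau$ in two ways and comparing. First I would apply the defining relation \eqref{qmod} directly to the element $\gamma\delta$, obtaining $f((\gamma\delta)\tau)=\sum_{t=0}^{d}j(\gamma\delta,\tau)^{k-t}(j_{\gamma\delta}')^{t}\rho(\gamma\delta)f_{t}(\tau)$. Then I would instead write $(\gamma\delta)\tau=\gamma(\delta\tau)$, apply \eqref{qmod} to $\gamma$ at the point $\delta\tau$, and rewrite each factor $j(\gamma,\delta\tau)$ via the cocycle \eqref{cocycle} as $j(\gamma\delta,\tau)/j(\delta,\tau)$. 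Here $\delta$ is to be thought of as fixed---it is the element whose action on $f_{r}$ we are after---while $\gamma$ is left free and serves as the ``variable'' in the final comparison.

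The algebraic heart of the matter is an identity for $j_{\gamma\delta}'$. Differentiating the cocycle $j_{\gamma\delta}(\tau)=j(\gamma,\delta\tau)j(\delta,\tau)$ in $\tau$, using $\frac{d}{d\tau}(\delta\tau)=j(\delta,\tau)^{-2}$ together with the constancy of $j_{\gamma}'$, I would obtain
\[
j_{\gamma\delta}'=\frac{j_{\gamma}'+j_{\gamma\delta}\,j_{\delta}'}{j_{\delta}}.
\]
Substituting this into the first expansion and expanding $(j_{\gamma\delta}')^{t}$ by the binomial theorem is exactly where the coefficients $\binom{t}{r}$ are produced: collecting powers of $j_{\gamma}'$ and reindexing by $r=t-i$ turns the $\gamma\delta$-side into $\rho(\delta)\sum_{r}(j_{\gamma}')^{r}j_{\gamma\delta}^{k-r}\sum_{t\ge r}\binom{t}{r}j_{\delta}^{-t}(j_{\delta}')^{t-r}f_{t}$, while the $\gamma(\delta\tau)$-side reads $\sum_{r}(j_{\gamma}')^{r}j_{\gamma\delta}^{k-r}\,j_{\delta}^{-(k-r)}f_{r}(\delta\tau)$. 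In the multiplier-system case one replaces the naive $\rho(\gamma\delta)=\rho(\gamma)\rho(\delta)$ by the defining cocycle of a multiplier system in its $l=0$ form, which supplies exactly the extra powers of $j$ needed for the two sides to line up; since only the factor $j^{k}$ is non-integral while all remaining powers of $j$ and of $j'$ are integral, this is pure bookkeeping with a fixed branch of the logarithm.

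It remains to match the two sides coefficientwise. After cancelling $\rho(\gamma)$ and factoring out $j_{\gamma\delta}^{k}$, both sides become polynomials of degree $\le d$ in the single quantity $\xi_{\gamma}=j_{\gamma}'/j_{\gamma\delta}(\tau)$, with coefficients depending only on $\delta$ and $\tau$. I would argue that these coefficients agree by a Vandermonde/linear-independence argument: fixing $\tau$ and $\delta$ and letting $\gamma$ range over the infinite group $\Gamma$, the scalar $\xi_{\gamma}$ takes more than $d$ distinct values, so the vanishing of a degree-$\le d$ polynomial at all such $\xi_{\gamma}$ forces every coefficient to vanish. Matching the coefficient of $(j_{\gamma}')^{r}j_{\gamma\delta}^{k-r}$, clearing $j_{\delta}^{k-r}$, and setting $t=r+s$ then yields precisely $f_{r}(\delta\tau)=\sum_{s=0}^{d-r}j(\delta,\tau)^{(k-2r)-s}(j_{\delta}')^{s}\binom{r+s}{r}\rho(\delta)f_{r+s}(\tau)$, which is the assertion. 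Finally I would note that $f_{r}$ automatically inherits the differential/growth type $*$ from the hypothesis on $f$ (the companion functions are of that type by the definition of $\widetilde{\mathcal{M}}_{k}^{*,\le d}(\rho)$), and that the nonvanishing of the top companion $\binom{d}{r}f_{d}$ shows the depth of $f_r$ equals $d-r$. The step I expect to be the main obstacle is this coefficient extraction: because $\Gamma$ is discrete one cannot simply differentiate in $\gamma$, so the linear independence must come from the Vandermonde structure over sufficiently many group elements, and one must check that enough such elements are available for the given Fuchsian $\Gamma$.
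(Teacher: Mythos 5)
Your proof is correct, and it streamlines the paper's argument rather than reproducing it. Both proofs begin the same way: expand $f((\gamma\delta)\tau)$ once via the element $\gamma\delta$ and once via $\gamma$ at the point $\delta\tau$, use the cocycle \eqref{cocycle} and its derivative, and compare coefficients as $\gamma$ varies with $\delta$ fixed. The difference is which monomials are collected. The paper substitutes $j_{\gamma}'=j_{\delta}j_{\gamma\delta}'-j_{\gamma\delta}j_{\delta}'$ so as to write everything in powers of $j_{\gamma\delta}$ and $j_{\gamma\delta}'$; this yields Equation \eqref{frqmod}, which expresses $\rho(\delta)f_{r}(\tau)$ in terms of the values $f_{s}(\delta\tau)$ --- the transformation law ``backwards'' --- and the paper must then invert this triangular system by decreasing induction on $r$, using the binomial identity $\sum_{s=r+1}^{t}\binom{t-r}{s-r}(-1)^{s-r}=-1$. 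You instead substitute $j_{\gamma\delta}'=(j_{\gamma}'+j_{\gamma\delta}j_{\delta}')/j_{\delta}$ on the $\gamma\delta$-side and collect powers of $j_{\gamma}'$, which hands you $f_{r}(\delta\tau)$ directly in terms of the $f_{t}(\tau)$ with the coefficients $\binom{t}{r}$ produced by a single binomial expansion; the entire induction step of the paper disappears. A further point in your favor: you make explicit the justification for the coefficient extraction, namely that for fixed $\tau$ and $\delta$ the quantity $\xi_{\gamma}$ takes more than $d$ distinct values as $\gamma$ ranges over $\Gamma$ --- which indeed holds for any finite-covolume Fuchsian group, since for $c_{\gamma}\neq0$ one has $\xi_{\gamma}=j_{\delta}(\tau)^{-1}(\delta\tau+d_{\gamma}/c_{\gamma})^{-1}$ and the values $-d_{\gamma}/c_{\gamma}=\gamma^{-1}\infty$ run over an infinite orbit --- whereas the paper passes over this step with the phrase ``the fact that both equalities hold for every $\gamma\in\Gamma$ implies.'' The same Vandermonde fact is needed by the paper's version as well, so the obstacle you flag at the end is real but resolvable, and your treatment of the multiplier-system bookkeeping and of the analytic type and depth of the $f_{r}$ is at the same level of detail as the paper's.
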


The proof of Lemma 119 of \cite{[MR]} used the quasi-modularity of $E_{2}$ with
respect to $SL_{2}(\mathbb{Z})$. Since we work with a more general notion and
the Fuchsian group $\Gamma$ is also arbitrary, we extend his proof, with some
adjustments, to the present context.

\begin{proof}
Take $\tau\in\mathcal{H}$ and $\gamma$ and $\delta$ in $\Gamma$. Equation
\eqref{qmod} with $\gamma\delta\in\Gamma$ and $\tau\in\mathcal{H}$ yields
\[f(\gamma\delta\tau)=\sum_{r=0}^{d}\rho(\gamma\delta)f_{r}(\tau)j_{\gamma\delta
}^{k-r}(\tau)(j_{\gamma\delta}')^{r}.\] On the other hand, we can apply Equation
\eqref{qmod} with $\gamma\in\Gamma$ and $\delta\tau\in\mathcal{H}$, to obtain,
using Equation \eqref{cocycle} and its derivative, the equality
\[f(\gamma\delta\tau)=\sum_{r=0}^{d}\bigg[\sum_{s=r}^{d}\binom{s}{r}
\rho(\gamma)f_{s}(\delta\tau)j_{\delta}^{s+r-k}(\tau)(-j_{\delta}')^{s-r}\bigg]
j_{\gamma\delta}^{k-r}(\tau)(j_{\gamma\delta}')^{r}.\] Applying
$\rho(\gamma)^{-1}$, the fact that both equalities hold for every
$\gamma\in\Gamma$ implies
\begin{equation}
\rho(\delta)f_{r}(\tau)=\sum_{s=r}^{d}\binom{s}{r}f_{s}(\delta\tau)j(\delta,
\tau)^{s+r-k}(-j_{\delta}')^{s-r} \label{frqmod}
\end{equation}
for every $0 \leq r \leq d$.

We now use Equation \eqref{frqmod} in order to prove by decreasing induction on
$r$ the assertion of the lemma, namely
\[f_{r}(\delta\tau)=\sum_{t=r}^{d}\binom{t}{r}\rho(\delta)f_{t}(\tau)j(\delta,
\tau)^{k-t-r}(j_{\delta}')^{t-r}\] for every $0 \leq r \leq d$,
$\tau\in\mathcal{H}$, and $\delta\in\Gamma$. Substituting $r=d$ in Equation
\eqref{frqmod} gives the first step of the induction. Assuming that the
assertion holds for any $r<s \leq d$ we find that the left hand side of
Equation
\eqref{frqmod} with the index $r$ is
\[f_{r}(\delta\tau)j_{\delta}^{2r-k}(\tau)+\!\!\sum_{s=r+1}^{d}\!\!\binom{s}{r}
\!\sum_{t=s}^{d}\!\binom{t}{s}\rho(\delta)f_{t}(\tau)j_{\delta}^{k-t-s}(\tau)(j_
{\delta}')^{t-s}j_{\delta}^{s+r-k}(\tau)(-j_{\delta}')^{s-r}\!\!=\]
\[f_{r}(\delta\tau)j_{\delta}^{2r-k}(\tau)+\sum_{t=r+1}^{d}\binom{t}{r}
\rho(\delta)f_{t}(\tau)j_{\delta}^{r-t}(\tau)(j_{\delta}')^{t-r}\sum_{s=r+1}^{t}
\binom{t-r}{s-r}(-1)^{s-r}.\] The sum over $s$ equals $-1$ by the binomial
theorem. Comparing this expression with the left hand side of Equation
\eqref{frqmod} yields the assertion for $r$. Since the differential and growth
properties of the functions $f_{r}$ follow directly from those of $f$, this
proves the lemma.
\end{proof}

The relation between quasi-modular and modular forms is given in the following
generalization of Proposition 132 and Remarque 133 of \cite{[MR]} and of Theorem
1 of \cite{[A]}.

\begin{prop}
Let $f\in\widetilde{\mathcal{M}}_{k}^{an,\leq d}(\rho)$ with the corresponding
functions $f_{r}$, $0 \leq r \leq d$, and define the function
$F(\tau)=\sum_{r=0}^{d}\frac{f_{r}(\tau)}{(2iy)^{r}}$. Then
$F\in\mathcal{M}_{k}^{an}(\rho)$. Conversely, let
$F_{s}\in\mathcal{M}_{k-2s}^{an}(\rho)$ be given for every $0 \leq s \leq d$. In
this case the function $f(\tau)=\sum_{s=0}^{d}\frac{F_{s}(\tau)}{(-2iy)^{s}}$
lies in $\widetilde{\mathcal{M}}_{k}^{an,\leq d}(\rho)$, with the corresponding
functions being
$f_{r}(\tau)=\sum_{s=r}^{d}\binom{s}{r}\frac{F_{s}(\tau)}{(-2iy)^{s-r}}$ with $0
\leq r \leq d$ (hence the depth of $f$ is precisely $d$ if and only if
$F_{d}\neq0$). \label{qmodtomod}
\end{prop}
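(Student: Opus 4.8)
The plan is to prove both directions by a direct computation, exploiting the transformation law \eqref{frqmod} that was established in the course of proving Lemma \ref{qmodcomp}. The central observation is that the function $\frac{1}{2iy}$ transforms under $\gamma\in\Gamma$ in a way that exactly mirrors the correction terms appearing in Equation \eqref{qmod}. First I would compute how $\frac{1}{2iy}$ behaves under the action of $\gamma$. Writing $\tau=x+iy$, we have $2iy=\tau-\overline{\tau}$, and a short calculation using $\gamma\tau=\frac{a\tau+b}{c\tau+d}$ shows that $\Im(\gamma\tau)=\frac{y}{|j(\gamma,\tau)|^{2}}=\frac{y}{j(\gamma,\tau)\overline{j(\gamma,\tau)}}$, so that $\frac{1}{2i\Im(\gamma\tau)}=\frac{j(\gamma,\tau)\overline{j(\gamma,\tau)}}{2iy}$. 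The crucial refinement is to expand this relation so as to isolate the holomorphic factor $j(\gamma,\tau)^{2}$; using $\overline{j(\gamma,\tau)}=j(\gamma,\tau)-2iyj_{\gamma}'$ (which follows from $j(\gamma,\tau)=c\tau+d$ and $j_{\gamma}'=c$), I obtain the key identity
\begin{equation}
\frac{1}{2i\Im(\gamma\tau)}=\frac{j(\gamma,\tau)^{2}}{2iy}-j(\gamma,\tau)j_{\gamma}'. \label{yinvtransf}
\end{equation}

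For the forward direction, I would substitute $\tau\mapsto\gamma\tau$ into the definition $F(\tau)=\sum_{r=0}^{d}\frac{f_{r}(\tau)}{(2iy)^{r}}$, apply the transformation law \eqref{frqmod} for each $f_{r}(\gamma\tau)$, and use \eqref{yinvtransf} raised to the $r$th power to rewrite each factor $\frac{1}{(2i\Im\gamma\tau)^{r}}$. After collecting terms, the claim is that all the non-modular correction terms (those involving positive powers of $j_{\gamma}'$) cancel, leaving precisely $F(\gamma\tau)=j(\gamma,\tau)^{k}\rho(\gamma)F(\tau)$, i.e. $F\in\mathcal{M}_{k}^{an}(\rho)$. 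This cancellation is a binomial-coefficient bookkeeping exercise: the double sum over $r$ (from the definition of $F$) and over the expansion of $\eqref{frqmod}$ reorganizes, and the alternating signs coming from the $(-j_{\delta}')$ in \eqref{frqmod} combine with those from expanding $\eqref{yinvtransf}^r$ via the binomial theorem to annihilate everything except the leading modular term.

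For the converse, I would carry out the reverse computation. Starting from $f(\tau)=\sum_{s=0}^{d}\frac{F_{s}(\tau)}{(-2iy)^{s}}$ with each $F_{s}\in\mathcal{M}_{k-2s}^{an}(\rho)$, I would compute $f(\gamma\tau)$ by applying the modularity of each $F_{s}$ (namely $F_{s}(\gamma\tau)=j(\gamma,\tau)^{k-2s}\rho(\gamma)F_{s}(\tau)$) together with the $s$th power of \eqref{yinvtransf}. Expanding and regrouping the resulting expression as a polynomial in $j_{\gamma}'$ with coefficients built from the $F_{s}$, I would verify that it matches $\sum_{r=0}^{d}j(\gamma,\tau)^{k-r}(j_{\gamma}')^{r}\rho(\gamma)f_{r}(\tau)$ with the claimed $f_{r}(\tau)=\sum_{s=r}^{d}\binom{s}{r}\frac{F_{s}(\tau)}{(-2iy)^{s-r}}$. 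This again reduces to matching binomial coefficients after substituting \eqref{yinvtransf}.

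The main obstacle will be the combinatorial bookkeeping in verifying the cancellation in the forward direction and the precise identification of the $f_{r}$ in the converse. In both cases the essential mechanism is the binomial theorem applied to powers of \eqref{yinvtransf}, producing alternating sums of binomial coefficients that collapse — exactly the same phenomenon as the identity $\sum_{s=r+1}^{t}\binom{t-r}{s-r}(-1)^{s-r}=-1$ used in the proof of Lemma \ref{qmodcomp}. Care is needed to keep the index ranges consistent and to track the signs introduced by the $(-2iy)$ versus $(2iy)$ conventions in the two directions; once the key transformation identity \eqref{yinvtransf} is in hand, the differential and growth conditions transfer automatically because multiplication by powers of $\frac{1}{y}$ preserves real-analyticity and the relevant growth behavior, so no separate argument for the superscript $an$ is required.
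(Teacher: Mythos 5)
Your proposal is correct and follows essentially the same route as the paper: you derive exactly the paper's key identity (its Equation \eqref{y-1-1}), combine it with the quasi-modular transformation of the $f_{r}$ established in Lemma \ref{qmodcomp} (via Equation \eqref{frqmod}) for the forward direction, and use the modularity of the $F_{s}$ plus the same identity for the converse, with the same binomial-coefficient cancellation doing the work in both directions. The only cosmetic difference is that the paper invokes the statement of Lemma \ref{qmodcomp} directly (the inverted form of \eqref{frqmod}, expressing $f_{r}(\gamma\tau)$ in terms of the $f_{t}(\tau)$), which is the form actually needed when substituting into $F(\gamma\tau)$.
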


\begin{proof}
Recall that the equality $\Im\gamma\tau=\frac{y}{|j(\gamma,\tau)|^{2}}$ implies
\begin{equation}
\frac{1}{2i\Im\gamma\tau}=\frac{j(\gamma,\tau)^{2}}{2iy}-j(\gamma,\tau)j_{\gamma
}'. \label{y-1-1}
\end{equation}
Lemma \ref{qmodcomp} and Equation \eqref{y-1-1} allow one to write
$F(\gamma\tau)=\sum_{r=0}^{d}\frac{f_{r}(\gamma\tau)}{(2i\Im\gamma\tau)^{r}}$ as
\[\sum_{r=0}^{d}\sum_{t=r}^{d}\binom{t}{r}\rho(\gamma)f_{t}(\tau)j_{\gamma}^{
k-r-t}(\tau)(j_{\gamma}')^{t-r}\sum_{h=0}^{r}\binom{r}{h}\frac{j_{\gamma}^{2h}
(\tau)}{(2iy)^{h}}(-1)^{r-h}j_{\gamma}^{r-h}(\tau)(j_{\gamma}')^{r-h}\!=\]
\[=\sum_{0 \leq h \leq t \leq
d}\binom{t}{h}\frac{\rho(\gamma)f_{t}(\tau)}{(2iy)^{h}}j(\gamma,\tau)^{k-t+h}(j_
{\gamma}')^{t-h}\sum_{r=h}^{t}\binom{t-h}{r-h}(-1)^{r-h}.\] Since the sum over
$r$ is 1 if $t=h$ and 0 otherwise, this expression for $F(\gamma\tau)$ reduces
to $\sum_{t=0}^{d}\frac{\rho(\gamma)f_{t}(\tau)}{(2iy)^{t}}j(\gamma,\tau)^{k}
=j(\gamma,\tau)^{k}\rho(\gamma)F(\tau)$. This proves the first assertion. For
the second assertion, write
$f(\gamma\tau)=\sum_{s=0}^{d}\frac{F_{s}(\gamma\tau)}{(-2i\Im\gamma\tau)^{s}}$.
The modularity of the functions $F_{s}$ and Equation \eqref{y-1-1} now show that
this expression equals
\[\sum_{s=0}^{d}\rho(\gamma)F_{s}(\tau)j_{\gamma}^{k-2s}(\tau)\sum_{r=0}^{s}
\binom{s}{r}j_{\gamma}^{r}(\tau)(j_{\gamma}')^{r}\frac{j_{\gamma}^{2(s-r)}(\tau)
}{(-2iy)^{s-r}},\] which reduces to the desired expression by the definition of
the functions $f_{r}$. This proves the proposition.
\end{proof}

The two maps in Proposition \ref{qmodtomod} are inverse to one another in the
following sense. Given $f\in\widetilde{\mathcal{M}}_{k}^{an,\leq d}(\rho)$ hence
$f_{r}\in\widetilde{\mathcal{M}}_{k-2r}^{an,\leq d-r}(\rho)$ for each $0 \leq r
\leq d$ (by Lemma \ref{qmodcomp}), Proposition \ref{qmodtomod} constructs the
functions $F_{r}\in\mathcal{M}_{k-2r}^{an}(\rho)$ for every such $r$. Applying
the inverse construction from Proposition \ref{qmodtomod} to the functions
$F_{r}$ yields back the original quasi-modular form $f$. Conversely, assume that
$f\in\widetilde{\mathcal{M}}_{k}^{an,\leq d}(\rho)$ is obtained by Proposition
\ref{qmodtomod} from $F_{s}\in\mathcal{M}_{k-2s}^{an}(\rho)$, $0 \leq s \leq d$,
and let $f_{r}$, $0 \leq r \leq d$ be the corresponding functions from Equation
\eqref{qmod}. In this case the modular form in $\mathcal{M}_{k-2r}^{an}(\rho)$
constructed from $f_{r}\in\widetilde{\mathcal{M}}_{k-2r}^{an,\leq d-r}(\rho)$ is
the original modular form $F_{r}$ for each $0 \leq r \leq d$. Hence Proposition
\ref{qmodtomod} gives, for every weight $k$, depth bound $d$, group $\Gamma$,
and representation or multiplier system $\rho$, an isomorphism
\begin{equation}
\widetilde{\mathcal{M}}_{k}^{an,\leq
d}(\rho)\cong\bigoplus_{s=0}^{d}\mathcal{M}_{k-2s}^{an}(\rho), \label{qmaniso}
\end{equation}
which also preserves most reasonable growth conditions (boundedness, linear
exponential growth, exponential decay, etc.) at the cusps of $\Gamma$ (if
$\Gamma$ has cusps).

Proposition \ref{qmodtomod} and Equation \eqref{qmaniso} hold also when
replacing the superscript $an$ by $sing$ throughout. On the other hand,
restricting our attention to an element $f$ in the subspace
$\widetilde{\mathcal{M}}_{k}^{hol,\leq d}(\rho)$ of the isomorphism in Equation
\eqref{qmaniso}, the elements $F_{s}$ in the right hand side of that Equation
(and in particular $F=F_{0}$ from Proposition \ref{qmodtomod}) are almost
holomorphic functions on $\mathcal{H}$. On the other hand, if
$F\in\mathcal{M}_{k}^{an}(\rho)$ is almost holomorphic, we can write
$F(\tau)=\sum_{r=0}^{d}\frac{f_{r}(\tau)}{(2iy)^{r}}$ with $f_{r}$, $0 \leq r
\leq d$ holomorphic functions. Modularity implies
$F(\gamma\tau)=\rho(\gamma)j(\gamma,\tau)^{k}F(\gamma)$, and comparing the
polynomial expansion in $\frac{1}{2iy}$ over holomorphic functions on both sides
yields Equation \eqref{frqmod} for the functions $f_{r}$. Then the proof of
Lemma \ref{qmodcomp} and Proposition \ref{qmodtomod} imply that
$F_{r}(\tau)=\sum_{s=r}^{d}\binom{s}{r}\frac{f_{s}(\tau)}{(2iy)^{s-r}}$ is in
$\mathcal{M}_{k-2r}^{an}(\rho)$ (and almost holomorphic) for each $0 \leq r \leq
d$, $F_{0}=F$, and the function
$f_{0}:\tau\mapsto\sum_{r=0}^{d}\frac{F_{r}(\tau)}{(-2iy)^{r}}$ is in
$\widetilde{\mathcal{M}}_{k}^{hol,\leq d}(\rho)$. Hence this construction
reproduces (and generalizes) the isomorphism given in Proposition 132 and
Remarque 133 of \cite{[MR]} or in Theorem 1 of \cite{[A]}.

We remark that the action of the $\mathfrak{sl}_{2}$-triple presented in Section
3 of \cite{[A]} is a direct consequence of the holomorphic case of Proposition
\ref{qmodtomod} (with trivial representation) and the action of
$\mathfrak{sl}_{2}(\mathbb{C})$ on modular forms considered as functions on
$SL_{2}(\mathbb{R})$ with the appropriate behavior under the action of the
maximal compact subgroup $O(2)$ of $SL_{2}(\mathbb{R})$. To see this, observe
that \cite{[Ve]} shows that the elements $W=\binom{\ \ -i}{i\ \ \ }$,
$Z=\frac{1}{2}\binom{1\ \ \ \ i}{i\ \ -1}$, and
$\overline{Z}=\frac{1}{2}\binom{\ 1\ \ -i}{-i\ \ -1}$ of
$\mathfrak{sl}_{2}(\mathbb{C})$ form an $\mathfrak{sl}_{2}$-triple, and they act
on modular forms of weight $k$ as multiplication by $k$, the weight raising
operator $2i\delta_{k}=2i\frac{\partial}{\partial\tau}+\frac{k}{y}$, and the
weight lowering operator
$\frac{L}{2i}=-2iy^{2}\frac{\partial}{\partial\overline{\tau}}$, respectively.
Now, if $F$ arises from $f\in\widetilde{\mathcal{M}}_{k}^{hol}(\Gamma)$ as in
Proposition \ref{qmodtomod}, then $\delta_{k}F$ has ``constant coefficient''
$f'$ as a polynomial in $\frac{1}{2iy}$ (see, for example, Proposition 135 of
\cite{[MR]}). On the other hand,
$LF(\tau)=\sum_{r=0}^{d-1}\frac{(r+1)f_{r+1}(\tau)}{(2iy)^{r}}$ has ``constant
term'' $f_{1}$. Thus, the operators acting on
$\widetilde{\mathcal{M}}_{k}^{hol}(\Gamma)$ which are considered in \cite{[A]},
namely $H$ (multiplication by the weight), $D=\frac{\partial}{\partial\tau}$,
and $\delta:f \mapsto f_{1}$, correspond to the $\mathfrak{sl}_{2}$-triple $W$,
$\frac{Z}{2i}$, and $2i\overline{Z}$, respectively.

\section{Modular Forms with the Representation $V_{m}$ \label{Vm}}

In this Section we present modular forms with multiplier systems involving
$V_{m}$, establish the relation between these modular forms and quasi-modular
forms, and determine the space of such modular forms which are holomorphic.

\subsection{Quasi-Modular Forms as Modular Forms with Multiplier Systems
Involving $V_{m}$}

The group $SL_{2}(\mathbb{R})$ (and, more generally, $GL_{2}^{+}(\mathbb{R})$)
acts naturally on the space $\mathbb{C}^{2}$ of complex row vectors of length 2,
by $\gamma:u\mapsto\gamma u$. We denote this representation space $V_{1}$, and
let $V_{m}$ be its $m$th symmetric power. We denote the action of $\gamma \in
SL_{2}(\mathbb{R})$ on an element $u \in V_{m}$ by $\gamma^{m}u$ (this notation
should lead to no confusion with the $m$th power of $\gamma$ in $\Gamma$). We
shall also use multiplicative notation for elements
in $V_{m}$: If $v_{i}$, $1 \leq i \leq m$, are vectors in $\mathbb{C}^{2}$, we
denote the image of $v_{i}\otimes\ldots \otimes v_{m} \in V_{1}^{\otimes m}$ in
$V_{m}$ by the product $\prod_{i=1}^{m}v_{i}$, with powers denoting repetitions.

The modular forms with representation $V_{m}$, as appearing here, were defined
by Shimura in \cite{[Sh]} following an idea of Eichler in \cite{[E]}. In this
paper we show how these modular forms (and their direct limit) form a framework
for quasi-modular forms, and how almost holomorphic modular forms can be viewed
as holomorphic vector-valued modular forms. Since $V_{m}$ is a representation of
the full group $SL_{2}(\mathbb{R})$, we preserve $\Gamma$ in the notation
$\mathcal{M}_{k}^{*}(\Gamma,V_{m})$ etc. where there is no tensor product with
some multiplier system $\rho$ of $\Gamma$. We now present some of the properties
of modular forms with representations involving $V_{m}$, which will be needed
later on.

We start with the classical equation, stating that for every $\gamma \in
SL_{2}(\mathbb{R})$ and $\tau\in\mathcal{H}$, the equality
\begin{equation}
\gamma\left(\begin{array}{c} \tau \\
1\end{array}\right)=j(\gamma,\tau)\left(\begin{array}{c} \gamma\tau \\
1\end{array}\right) \label{V1mod}
\end{equation}
holds. Hence the map $\tau\mapsto\binom{\tau}{1}$ is in
$\mathcal{M}_{-1}^{hol}(\Gamma,V_{1})$ for any discrete subgroup $\Gamma$ of
$SL_{2}(\mathbb{R})$. Thus, $\tau\mapsto\binom{\overline{\tau}}{1}$ lies in
$\mathcal{M}_{0,-1}^{an}(\Gamma,V_{1})$ for any such $\Gamma$. It follows that
for any pair of numbers $m_{+}$ and $m_{-}$ such that $m_{+}+m_{-}=m$, the map
$\tau\mapsto\binom{\tau}{1}^{m_{+}}\binom{\overline{\tau}}{1}^{m_{-}}$ defines
an element of $\mathcal{M}_{-m_{+},-m_{-}}^{an}(\Gamma,V_{m})$, and for
$m_{+}=m$ and $m_{-}=0$ we can replace the superscript $an$ by $hol$. This
simple observation allows us to characterize modular forms with representations
involving $V_{m}$.

\begin{prop}
Let $\Gamma$ be a Fuchsian group, let $k$ and $l$ be weights, and let $\rho$ be
a multiplier system of weight $(k,l)$ for $\Gamma$. If
$f^{m_{+},m_{-}}\in\mathcal{M}_{k+m_{+},l+m_{-}}^{an}(\rho)$ for every pair
of non-negative integers $m_{+}$ and $m_{-}$ such that $m_{+}+m_{-}=m$ then the
$V_{m} \otimes V_{\rho}$-valued function
\begin{equation}
F(\tau)=\sum_{m_{+}+m_{-}=m}f^{m_{+},m_{-}}(\tau)\binom{\tau}{1}^{m_{+}}\binom{
\overline{\tau}}{1}^{m_{-}} \label{Vmdecom}
\end{equation}
lies in $\mathcal{M}_{k,l}^{an}(V_{m}\otimes\rho)$. Conversely, every element of
$\mathcal{M}_{k,l}^{an}(V_{m}\otimes\rho)$ is obtained in this way.
\label{Vmcompmod}
\end{prop}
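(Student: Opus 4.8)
The plan is to verify directly that the explicitly constructed function $F$ in Equation \eqref{Vmdecom} satisfies the functional equation \eqref{modrep} for the representation $V_{m}\otimes\rho$ and weight $(k,l)$, and then to prove the converse by a change-of-basis argument showing that the monomials $\binom{\tau}{1}^{m_{+}}\binom{\overline{\tau}}{1}^{m_{-}}$ form a basis of $V_{m}$ at every point $\tau\in\mathcal{H}$. For the first (forward) direction, I would substitute $\gamma\tau$ for $\tau$ in each summand and use the already-established transformation laws: the vector $\binom{\tau}{1}$ lies in $\mathcal{M}_{-1}^{hol}(\Gamma,V_{1})$ by Equation \eqref{V1mod}, so that $\gamma^{m_{+}}\binom{\gamma\tau}{1}^{m_{+}}=j(\gamma,\tau)^{-m_{+}}\binom{\tau}{1}^{m_{+}}$ under the $m$th symmetric power action, and similarly the conjugate vector contributes a factor $\overline{j(\gamma,\tau)}^{-m_{-}}$. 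Combined with $f^{m_{+},m_{-}}(\gamma\tau)=j(\gamma,\tau)^{k+m_{+}}\overline{j(\gamma,\tau)}^{l+m_{-}}\rho(\gamma)f^{m_{+},m_{-}}(\tau)$, the powers $j(\gamma,\tau)^{m_{+}}$ and $\overline{j(\gamma,\tau)}^{m_{-}}$ cancel against the reciprocal factors coming from the symmetric-power action on the monomials, leaving exactly $j(\gamma,\tau)^{k}\overline{j(\gamma,\tau)}^{l}(V_{m}\otimes\rho)(\gamma)F(\tau)$, which is the desired equation \eqref{modrep}.

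For the converse direction, the key observation is that for each fixed $\tau\in\mathcal{H}$ the two vectors $\binom{\tau}{1}$ and $\binom{\overline{\tau}}{1}$ are linearly independent in $\mathbb{C}^{2}$ (their determinant is $\tau-\overline{\tau}=2iy\neq0$), and therefore the $m+1$ symmetric monomials $\binom{\tau}{1}^{m_{+}}\binom{\overline{\tau}}{1}^{m_{-}}$ with $m_{+}+m_{-}=m$ form a basis of $V_{m}$. Given an arbitrary $F\in\mathcal{M}_{k,l}^{an}(V_{m}\otimes\rho)$, I would expand it pointwise in this basis, which uniquely defines real-analytic $V_{\rho}$-valued coefficient functions $f^{m_{+},m_{-}}$; real-analyticity of these coefficients follows because the change-of-basis matrix depends real-analytically on $\tau$ (its entries are polynomials in $\tau$ and $\overline{\tau}$ divided by a power of $2iy$) and is invertible throughout $\mathcal{H}$. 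It then remains to check that each $f^{m_{+},m_{-}}$ individually transforms with weight $(k+m_{+},l+m_{-})$ and multiplier $\rho$; this is forced by applying the functional equation for $F$ and comparing coefficients in the basis, using the uniqueness of the expansion together with the transformation rule of the monomials established in the forward direction.

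The step I expect to be the main obstacle is justifying the coefficient comparison in the converse cleanly: because the basis $\{\binom{\tau}{1}^{m_{+}}\binom{\overline{\tau}}{1}^{m_{-}}\}$ at the point $\gamma\tau$ differs from the basis at $\tau$, one must verify that the $V_{m}$-action $\gamma^{m}$ maps the basis at $\tau$ precisely to (scalar multiples of) the basis at $\gamma\tau$ \emph{without mixing} the $m_{+}$ index — that is, that $\gamma^{m}$ sends $\binom{\tau}{1}^{m_{+}}\binom{\overline{\tau}}{1}^{m_{-}}$ to $j(\gamma,\tau)^{m_{+}}\overline{j(\gamma,\tau)}^{m_{-}}\binom{\gamma\tau}{1}^{m_{+}}\binom{\overline{\gamma\tau}}{1}^{m_{-}}$ with no cross terms. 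This diagonality is exactly what Equation \eqref{V1mod} (and its complex conjugate) guarantee on the level of $V_{1}$, and it is inherited by the symmetric power because the symmetric-power action is multiplicative on products of vectors. Once this diagonality is in hand, matching the coefficient of each monomial on both sides of $F(\gamma\tau)=j(\gamma,\tau)^{k}\overline{j(\gamma,\tau)}^{l}(V_{m}\otimes\rho)(\gamma)F(\tau)$ yields the individual functional equations for the $f^{m_{+},m_{-}}$, completing the proof.
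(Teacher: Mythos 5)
Your proposal is correct and follows essentially the same route as the paper: the forward direction exploits the weight-$(-m_{+},-m_{-})$ transformation of the monomials $\binom{\tau}{1}^{m_{+}}\binom{\overline{\tau}}{1}^{m_{-}}$ (i.e.\ Equation \eqref{V1mod} and its conjugate) so that the powers of $j(\gamma,\tau)$ cancel, and the converse expands $F$ pointwise in this basis and compares coefficients of $F(\gamma\tau)$ computed two ways, using precisely the diagonality of the symmetric-power action that you highlight. Your explicit remarks on the real-analyticity of the coefficient functions and on the non-mixing of the $m_{+}$ index are details the paper leaves implicit, but they do not change the argument.
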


\begin{proof}
The properties of $f^{m_{+},m_{-}}$ and the weights of the vectors
$\binom{\tau}{1}^{m_{+}}\binom{\overline{\tau}}{1}^{m_{-}}$ imply that each of
the terms in Equation \eqref{Vmdecom} lies in
$\mathcal{M}_{k,l}^{an}(V_{m}\otimes\rho)$. Hence
$F\in\mathcal{M}_{k,l}^{an}(V_{m}\otimes\rho)$ as well. On the other hand, for
any $V_{m} \otimes V_{\rho}$-valued function $F$, the fact that the elements
$\binom{\tau}{1}^{m_{+}}\binom{\overline{\tau}}{1}^{m_{-}}$ with $m_{+}+m_{-}=m$
form a basis for the space $V_{m}$ allows us to write $F(\tau)$ as in Equation
\eqref{Vmdecom}, with the function $f^{m_{+},m_{-}}$ taking values in $V_{\rho}$
for any pair $(m_{+},m_{-})$. Let now $\tau\in\mathcal{H}$ and $\gamma\in\Gamma$
be given, and write
\[F(\gamma\tau)=\sum_{m_{+}+m_{-}=m}f^{m_{+},m_{-}}(\gamma\tau)\binom{\gamma\tau
}{1}^{m_{+}}\binom{\overline{\gamma\tau}}{1}^{m_{-}}.\] On the other hand, under
the assumption
$F\in\mathcal{M}_{k,l}^{an}(V_{m}\otimes\rho)$ we must have
$F(\gamma\tau)=j(\gamma,\tau)^{k}\overline{j(\gamma,\tau)}^{~l}\big(\gamma^{m}
\otimes\rho(\gamma)\big)F(\tau)$,
and Equation \eqref{V1mod} implies that $F(\gamma\tau)$ equals
\[j(\gamma,\tau)^{k}\overline{j(\gamma,\tau)}^{\
l}\!\!\sum_{m_{+}+m_{-}=m}\!\!\rho(\gamma)f^{m_{+},m_{-}}(\tau)j(\gamma,\tau)^{
m_{+}}\overline{j(\gamma,\tau)}^{\
m_{-}}\binom{\gamma\tau}{1}^{m_{+}}\binom{\overline{\gamma\tau}}{1}^{m_{-}}.\]
As the two expressions for $F(\gamma\tau)$ are presented in the same basis for
$V_{m}$, comparing the coefficients in front of
$\binom{\gamma\tau}{1}^{m_{+}}\binom{\overline{\gamma\tau}}{1}^{m_{-}}$ implies
that $f^{m_{+},m_{-}}$ is in $\mathcal{M}_{k+m_{+},l+m_{-}}^{an}(\rho)$ for any
pair $(m_{+},m_{-})$. This proves the proposition.
\end{proof}

\medskip

For any $m$ and $n$ there corresponds a natural map $V_{m} \otimes V_{n} \to
V_{m+n}$, which takes $\prod_{i=1}^{m}u_{i}\otimes\prod_{j=1}^{n}v_{j}$ to
$\prod_{i=1}^{m}u_{i}\cdot\prod_{j=1}^{n}v_{j}$ (in the multiplicative
notation). This map is obtained as the projection from $V_{m} \otimes
V_{n}\cong\bigoplus_{j=0}^{\min\{m,n\}}V_{m+n-2j}$ onto the component $V_{m+n}$
of maximal dimension, or onto the $S_{m+n}$-invariant part, or modulo the action
of $S_{m+n}$. Therefore, we consider the (tensor) product of
$f\in\mathcal{M}_{k,l}^{an}(V_{m}\otimes\rho)$ with
$g\in\mathcal{M}_{r,s}^{an}(V_{n}\otimes\eta)$ as an element of
$\mathcal{M}_{k+r,l+s}^{an}(V_{m+n}\otimes\rho\otimes\eta)$ (with the same
convention for product of elements from $\mathcal{M}_{k}^{*}(V_{m}\otimes\rho)$
etc.). This way of treating products illustrates our way of viewing the
multiplicative structure of quasi-modular forms below in a clearer manner. This
multiplicative notation also allows us to obtain the following corollary of
Proposition \ref{Vmcompmod}, which will turn out useful when we consider
quasi-modular forms with arbitrary depth later.

\begin{cor}
If $F\in\mathcal{M}_{k-m,l}^{an}(V_{m}\otimes\rho)$ then
$i_{m}(F)=F\cdot\binom{\tau}{1}$ is an element of
$\mathcal{M}_{k-m-1,l}^{an}(V_{m+1}\otimes\rho)$. The map $i_{m}$ is injective.
As for the image of $i_{m}$, an element
$\widetilde{F}\in\mathcal{M}_{k-m-1,l}^{an}(V_{m+1}\otimes\rho)$ can be expanded
with respect to the basis
$\binom{\tau}{1}^{m_{+}}\binom{\overline{\tau}}{1}^{m_{-}}$ with
$m_{+}+m_{-}=m+1$ (as in Equation \eqref{Vmdecom}). Then $\widetilde{F}$ lies in
the image of $i_{m}$ if and only if the coefficient of
$\binom{\overline{\tau}}{1}^{m+1}$ in this expansion vanishes. \label{VmVm+1}
\end{cor}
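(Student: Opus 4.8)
The plan is to derive every assertion from the multiplicative structure of forms with $V_{m}$-coefficients together with the basis decomposition of Proposition \ref{Vmcompmod}. For the first claim, I would note that $\tau\mapsto\binom{\tau}{1}$ lies in $\mathcal{M}_{-1}^{hol}(\Gamma,V_{1})=\mathcal{M}_{-1,0}^{an}(\Gamma,V_{1})$ by Equation \eqref{V1mod}. The product convention $V_{m}\otimes V_{1}\to V_{m+1}$ then places $i_{m}(F)=F\cdot\binom{\tau}{1}$ in $\mathcal{M}_{(k-m)-1,l}^{an}(V_{m+1}\otimes\rho)=\mathcal{M}_{k-m-1,l}^{an}(V_{m+1}\otimes\rho)$, which is exactly the asserted membership; no further argument is needed here.

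For injectivity and one inclusion of the image description, I would pass to the coordinate form of Proposition \ref{Vmcompmod}. Writing $F(\tau)=\sum_{m_{+}+m_{-}=m}f^{m_{+},m_{-}}(\tau)\binom{\tau}{1}^{m_{+}}\binom{\overline{\tau}}{1}^{m_{-}}$ with $f^{m_{+},m_{-}}\in\mathcal{M}_{(k-m)+m_{+},l+m_{-}}^{an}(\rho)$, the multiplicative notation yields at once
\[i_{m}(F)(\tau)=\sum_{m_{+}+m_{-}=m}f^{m_{+},m_{-}}(\tau)\binom{\tau}{1}^{m_{+}+1}\binom{\overline{\tau}}{1}^{m_{-}}.\]
Hence, in the expansion of $\widetilde{F}=i_{m}(F)$ in the basis indexed by $(n_{+},n_{-})$ with $n_{+}+n_{-}=m+1$, the coefficient of $\binom{\tau}{1}^{n_{+}}\binom{\overline{\tau}}{1}^{n_{-}}$ is $f^{n_{+}-1,n_{-}}$ for $n_{+}\geq1$, while the coefficient of $\binom{\overline{\tau}}{1}^{m+1}$ (the case $n_{+}=0$) must vanish, since no summand has $m_{+}=-1$. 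Injectivity follows immediately: if $i_{m}(F)=0$ then all the coordinates $f^{m_{+},m_{-}}$ vanish, so $F=0$. The same computation gives the ``only if'' part of the image characterization, namely that the $\binom{\overline{\tau}}{1}^{m+1}$-coefficient of any $i_{m}(F)$ is zero.

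For the reverse inclusion I would take any $\widetilde{F}\in\mathcal{M}_{k-m-1,l}^{an}(V_{m+1}\otimes\rho)$ whose $\binom{\overline{\tau}}{1}^{m+1}$-coefficient vanishes, expand it by Proposition \ref{Vmcompmod} as $\sum_{n_{+}+n_{-}=m+1}g^{n_{+},n_{-}}(\tau)\binom{\tau}{1}^{n_{+}}\binom{\overline{\tau}}{1}^{n_{-}}$ with $g^{n_{+},n_{-}}\in\mathcal{M}_{(k-m-1)+n_{+},l+n_{-}}^{an}(\rho)$ and $g^{0,m+1}=0$, and then undo the index shift by setting $f^{m_{+},m_{-}}=g^{m_{+}+1,m_{-}}$. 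The weights match, since $g^{m_{+}+1,m_{-}}\in\mathcal{M}_{(k-m-1)+(m_{+}+1),l+m_{-}}^{an}(\rho)=\mathcal{M}_{(k-m)+m_{+},l+m_{-}}^{an}(\rho)$, so Proposition \ref{Vmcompmod} reassembles the $f^{m_{+},m_{-}}$ into an element $F\in\mathcal{M}_{k-m,l}^{an}(V_{m}\otimes\rho)$; because $g^{0,m+1}=0$, one checks directly that $i_{m}(F)=\widetilde{F}$.

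I do not expect a genuine obstacle here: the whole content is the single index shift $m_{+}\mapsto m_{+}+1$ produced by one extra factor $\binom{\tau}{1}$. The only point needing care is verifying that the weights align under this shift (the drop of one unit in the holomorphic weight of $\widetilde{F}$ relative to $F$ precisely absorbs the increase of $m_{+}$), and that appending $\binom{\tau}{1}$ in the multiplicative notation really lands in the top component $V_{m+1}$ rather than producing lower $V_{m+1-2j}$-pieces — this being exactly the content of the stated product convention as the projection onto the $S_{m+1}$-invariant part.
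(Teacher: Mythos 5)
Your proposal is correct and takes essentially the same route as the paper: both rest on the basis expansion of Proposition \ref{Vmcompmod} and the index shift $m_{+}\mapsto m_{+}+1$, with the converse direction handled by exactly the same shift-back-and-reassemble argument. Even your shortcut for the first claim (multiplying by the element $\tau\mapsto\binom{\tau}{1}$ of $\mathcal{M}_{-1}^{hol}(\Gamma,V_{1})$) appears verbatim in the paper as a parenthetical alternative to deducing modularity from Proposition \ref{Vmcompmod}.
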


\begin{proof}
Note that if $F$ is expanded as in Equation \eqref{Vmdecom} then
\[i_{m}(F)(\tau)=\sum_{m_{+}+m_{-}=m}f^{m_{+},m_{-}}(\tau)\binom{\tau}{1}^{m_{+}
+1}\binom{\overline{\tau}}{1}^{m_{-}}.\] The modularity of $i_{m}(F)$ now
follows from Proposition \ref{Vmcompmod} (or from the fact that $F$ is
multiplied by an element of $\mathcal{M}_{-1}^{hol}(\Gamma,V_{1})$), and the
injectivity of $i_{m}$ is clear. Now, images of $i_{m}$ have vanishing
coefficient in front of $\binom{\overline{\tau}}{1}^{m+1}$ in this expansion.
Conversely, let
$\widetilde{F}\in\mathcal{M}_{k-m-1,l}^{an}(V_{m+1}\otimes\rho)$, and expand
$\widetilde{F}$ as in Equation \eqref{Vmdecom} (with coefficients
$\widetilde{f}^{m_{+},m_{-}}$ with $m_{+}+m_{-}=m+1$). Assuming that
$\widetilde{f}^{0,m+1}=0$, we define
$f^{m_{+},m_{-}}=\widetilde{f}^{m_{+}+1,m_{-}}$ for any pair $(m_{+},m_{-})$
with $m_{+}+m_{-}=m$. Proposition \ref{Vmcompmod} implies that
$f^{m_{+},m_{-}}\in\mathcal{M}_{k-m_{-},l+m_{-}}^{an}(\rho)$. Hence the function
$F$ defined by Equation \eqref{Vmdecom} with the same functions
$f^{m_{+},m_{-}}$ lies in $\mathcal{M}_{k-m,l}^{an}(V_{m}\otimes\rho)$, and
$\widetilde{F}=i_{m}(F)$. This proves the corollary.
\end{proof}

Once again, Proposition \ref{Vmcompmod} and Corollary \ref{VmVm+1} extend to the
case where the superscript $an$ is replaced by $sing$. Moreover, the
holomorphicity of the function $\tau\mapsto\binom{\tau}{1}$ implies that for
$l=0$, Corollary \ref{VmVm+1} holds with
$\mathcal{M}_{k-m}^{an}(V_{m}\otimes\rho)$ replaced by any of the other spaces
of the form $\mathcal{M}_{k-m}^{*}(V_{m}\otimes\rho)$ defined above.

\medskip

Assume $l=0$ in Proposition \ref{Vmcompmod}. The action of powers of $y$ on
modular forms implies that a modular form
$F\in\mathcal{M}_{k-m}^{an}(V_{m}\otimes\rho)$ can be written as
\begin{equation}
F(\tau)=\sum_{s=0}^{m}\frac{F_{s}(\tau)}{(-2iy)^{s}}\binom{\tau}{1}^{m-s}
\binom{\overline{\tau}}{1}^{s}, \label{holwt}
\end{equation}
where the function $F_{s}$ lies in $\mathcal{M}_{k-2s}^{an}(\rho)$ for each $0
\leq s \leq m$. We can now state the first main result of this paper.

\begin{thm}
If $f\in\widetilde{\mathcal{M}}_{k}^{an,\leq m}(\rho)$ with corresponding
functions $f_{r}$, then the function
\begin{equation}
F(\tau)=\sum_{r}f_{r}(\tau)\binom{\tau}{1}^{m-r}\binom{1}{0}^{r} \label{exp10}
\end{equation}
lies in $\mathcal{M}_{k-m}^{an}(V_{m}\otimes\rho)$. Conversely, if
$F\in\mathcal{M}_{k-m}^{an}(V_{m}\otimes\rho)$, then we can expand it as in
Equation \eqref{exp10}, and the coefficient in front of $\binom{\tau}{1}^{m}$ is
an element of $f\in\widetilde{\mathcal{M}}_{k}^{an,\leq m}(\rho)$. Moreover,
every $F\in\mathcal{M}_{k-m}^{an}(V_{m}\otimes\rho)$ admits an expansion as in
Equation \eqref{holwt}, and $F$ corresponds to the quasi-modular form $f$ if and
only if the functions $F_{s}$ from Equation \eqref{holwt} and the functions
$f_{r}$ from Equation \eqref{qmod} are mutually related as in Proposition
\ref{qmodtomod}. \label{rels}
\end{thm}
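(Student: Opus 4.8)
The plan is to reduce all three assertions to the two results already proved — Proposition \ref{Vmcompmod} (together with the expansion \eqref{holwt}) and Proposition \ref{qmodtomod} — by means of a single change-of-basis identity in $V_{m}$. The two relevant bases are the one occurring in \eqref{exp10}, namely $\binom{\tau}{1}^{m-r}\binom{1}{0}^{r}$, and the one occurring in \eqref{holwt} and in Proposition \ref{Vmcompmod}, namely $\binom{\tau}{1}^{m-s}\binom{\overline{\tau}}{1}^{s}$. The bridge between them is the elementary relation $\binom{\tau}{1}-\binom{\overline{\tau}}{1}=(\tau-\overline{\tau})\binom{1}{0}=2iy\binom{1}{0}$, which I would rewrite as $\binom{1}{0}=\tfrac{1}{2iy}\big(\binom{\tau}{1}-\binom{\overline{\tau}}{1}\big)$.

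First I would raise this to the $r$th power in the (commutative) symmetric algebra and apply the binomial theorem to obtain
\[\binom{\tau}{1}^{m-r}\binom{1}{0}^{r}=\sum_{s=0}^{r}\binom{r}{s}\frac{1}{(2iy)^{r-s}}\frac{1}{(-2iy)^{s}}\binom{\tau}{1}^{m-s}\binom{\overline{\tau}}{1}^{s}.\]
Substituting this into \eqref{exp10} and collecting the coefficient of each $\binom{\tau}{1}^{m-s}\binom{\overline{\tau}}{1}^{s}$ rewrites $F$ in the form \eqref{holwt} with $F_{s}=\sum_{r\geq s}\binom{r}{s}\frac{f_{r}}{(2iy)^{r-s}}$. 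This is precisely the relation between the $f_{r}$ and the $F_{s}$ recorded in Proposition \ref{qmodtomod} and in the discussion following it; the conceptual crux of the argument is exactly this coincidence, namely that the purely linear-algebraic change of basis from \eqref{exp10} to \eqref{holwt} agrees with the analytic correspondence of Proposition \ref{qmodtomod}.

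With that identity in hand the three assertions follow by chaining the two propositions. For the forward direction, starting from $f\in\widetilde{\mathcal{M}}_{k}^{an,\leq m}(\rho)$ with functions $f_{r}$, Proposition \ref{qmodtomod} (via Lemma \ref{qmodcomp}) gives $F_{s}\in\mathcal{M}_{k-2s}^{an}(\rho)$ for every $s$; the change of basis exhibits $F$ in the shape \eqref{holwt}, whence $F\in\mathcal{M}_{k-m}^{an}(V_{m}\otimes\rho)$ by Proposition \ref{Vmcompmod}. For the converse, an arbitrary $F\in\mathcal{M}_{k-m}^{an}(V_{m}\otimes\rho)$ expands in the basis of \eqref{exp10} with certain coefficients $f_{r}$, and also in the form \eqref{holwt} with $F_{s}\in\mathcal{M}_{k-2s}^{an}(\rho)$ by Proposition \ref{Vmcompmod}; feeding these $F_{s}$ into the converse half of Proposition \ref{qmodtomod} yields a quasi-modular form whose associated functions are, by the same displayed identity, exactly the $f_{r}$, and in particular $f=f_{0}$ is the coefficient of $\binom{\tau}{1}^{m}$. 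The ``moreover'' statement is then immediate, since the passage $F\leftrightarrow f$ is realized through precisely the $F_{s}\leftrightarrow f_{r}$ relations of Proposition \ref{qmodtomod}.

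The only genuine work is the binomial bookkeeping in the displayed identity and the check that its coefficients match those of Proposition \ref{qmodtomod}; I expect this index-matching to be the main, though routine, obstacle. I would also record the alternative of verifying the modularity of \eqref{exp10} directly, using \eqref{V1mod} together with the transformation law $\gamma\binom{1}{0}=j_{\gamma}'\binom{\gamma\tau}{1}+j(\gamma,\tau)^{-1}\binom{1}{0}$ (which plays a role analogous to \eqref{y-1-1}) and the quasi-modularity relation \eqref{qmod}; the change-of-basis route is preferable precisely because it reuses the completed proofs rather than repeating that computation.
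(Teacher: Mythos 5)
Your proposal is correct and takes essentially the same route as the paper's own proof: both pass between the bases of Equations \eqref{exp10} and \eqref{holwt} by expanding $\binom{1}{0}^{r}$ (equivalently $\binom{\overline{\tau}}{1}^{s}$) with the binomial theorem, and then invoke Lemma \ref{qmodcomp} and Propositions \ref{qmodtomod} and \ref{Vmcompmod} to transfer modularity. The coefficient identities you obtain, $F_{s}=\sum_{r\geq s}\binom{r}{s}f_{r}/(2iy)^{r-s}$ and its inverse $f_{r}=\sum_{s\geq r}\binom{s}{r}F_{s}/(-2iy)^{s-r}$, are exactly the ones appearing in the paper's argument, including for the ``moreover'' clause.
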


\begin{proof}
By expanding the $r$th power of
$\binom{1}{0}=\frac{1}{-2iy}\big[\binom{\overline{\tau}}{1}-\binom{\tau}{1}\big]
$ in Equation \eqref{exp10} using the Binomial Theorem we obtain an expression
for $F(\tau)$ as in Equation \eqref{holwt}, with the function $F_{s}(\tau)$
being $\sum_{r=s}^{m}\binom{r}{s}\frac{f_{r}(\tau)}{(2iy)^{r-s}}$ for every $s$.
According to Lemma \ref{qmodcomp} and Proposition \ref{qmodtomod},
$F_{s}\in\mathcal{M}_{k-2s}^{an}(\rho)$ for every $s$, so that Proposition
\ref{Vmcompmod} and Equation \eqref{holwt} imply
$F\in\mathcal{M}_{k-m}^{an}(V_{m}\otimes\rho)$. Conversely, by writing $F$ as in
Equation \eqref{holwt} and expanding the $s$th power of
$\binom{\overline{\tau}}{1}=\binom{\tau}{1}-2iy\binom{1}{0}$ we obtain Equation
\eqref{exp10} with
$f_{r}(\tau)=\sum_{s=r}^{m}\binom{s}{r}\frac{F_{s}(\tau)}{(-2iy)^{s-r}}$, and
$f_{0}\in\widetilde{\mathcal{M}}_{k}^{an,\leq m}(\rho)$ by Proposition
\ref{qmodtomod}. It also follows from the proof of the preceding assertions that
the relations between the coefficients $F_{s}$ in Equation \eqref{holwt} and the
functions $f_{r}$ in Equation \eqref{exp10} in the expansions of the same
modular form $F$ agree with the relations stated in Proposition \ref{qmodtomod},
as is evident from. This proves the theorem.
\end{proof}

The holomorphicity of the basis specified in Equation \eqref{exp10} shows that
we can replace the superscript $an$ by any of the other superscripts defined
above, and the assertion of Theorem \ref{rels} extends to each of these cases.

Note that Theorem \ref{rels} also respects the multiplicative structures in the
following sense. For two elements $f\in\widetilde{\mathcal{M}}_{k}^{*,\leq
m}(\rho)$ and $g\in\widetilde{\mathcal{M}}_{l}^{*,\leq n}(\eta)$ and their
tensor product $h=f \otimes g\in\widetilde{\mathcal{M}}_{k+l}^{*,\leq
m+n}(\rho\otimes\eta)$, Theorem \ref{rels} yields the corresponding functions
$F\in\mathcal{M}_{k-m}^{*}(V_{m}\otimes\rho)$,
$G\in\mathcal{M}_{l-n}^{*}(V_{n}\otimes\eta)$, and $H$ in
$\mathcal{M}_{k+l-m-n}^{*}(V_{m+n}\otimes\rho\otimes\eta)$. Then $H=F \otimes G$
(under the convention of taking only the $V_{m+n}$ part of $V_{m} \otimes
V_{n}$),since if $f_{r}$, $0 \leq r \leq m$, $g_{s}$, $0 \leq s \leq n$, and
$h_{t}$, $0 \leq t \leq m+n$ are the functions corresponding to $f$, $g$, and
$h$ in Equation \eqref{qmod} respectively, then $h_{t}=\sum_{r+s=t}f_{r} \otimes
g_{s}$. Indeed, taking the tensor product of the functions $F$ and $G$ using the
expressions from Equation \eqref{exp10} yields the form of $H$ in the same
equation.

\medskip

Theorem \ref{rels} establishes a modular framework for quasi-modular forms with
bounded depth. The idea resembles slightly the
$\mathfrak{sl}_{2}(\mathbb{C})$-representations $\mathcal{U}_{k}$ presented at
the end of Section 3 of \cite{[A]}, though these are not equivalent
representations. We now use the maps $i_{m}$ from Corollary \ref{VmVm+1} in
order to gather all the quasi-modular forms together, and also to obtain the
multiplicative structure inside this ring. Let $V_{\infty}$ be the direct limit
of the $V_{m}$ with respect to the maps $i_{m}$. The representation space of
$V_{\infty}$ is infinite-dimensional, with two possible bases being
$\big\{\binom{\tau}{1}^{\infty-s}\binom{\overline{\tau}}{1}^{s}\big\}_{s=0}^{
\infty}$ and
$\big\{\binom{\tau}{1}^{\infty-r}\binom{1}{0}^{r}\big\}_{r=0}^{\infty}$.
However, images of this direct limit are $V_{\infty}$-valued functions of
$\tau\in\mathcal{H}$ in which only finitely many coefficients in either basis
are non-zero. After tensoring with $V_{\rho}$, the image of each of the spaces
$\mathcal{M}_{k-m}^{*}(V_{m}\otimes\rho)$ in this direct limit will be referred
to as modular forms (with the appropriate analytic properties) of weight
$k-\infty$ with representation $V_{\infty}\otimes\rho$ (denoted
$\mathcal{M}_{k-\infty}^{*}(V_{\infty}\otimes\rho)$), and any element
$\mathcal{M}_{k-\infty}^{*}(V_{\infty}\otimes\rho)$ arises from
$\mathcal{M}_{k-m}^{*}(V_{m}\otimes\rho)$ for some $m$. Even though
$k-\infty=-\infty$ for every finite $k$, we keep $k-\infty$ in the notation
$\mathcal{M}_{k-\infty}^{*}(V_{\infty}\otimes\rho)$, and the value of $k$ is of
course important. Since the maps $\widetilde{\mathcal{M}}_{k}^{*,\leq
m}(\rho)\to\mathcal{M}_{k-m}^{*}(V_{m}\otimes\rho)$ commute with the injections
$i_{m}$, we can consider the images of elements of
$\widetilde{\mathcal{M}}_{k}^{*}(\rho)$ (with no depth restriction) as modular
forms in $\mathcal{M}_{k-\infty}^{*}(V_{\infty}\otimes\rho)$.

The representation $V_{\infty}$ is also suitable for multiplicative properties.
Indeed, given $F\in\mathcal{M}_{k-m}^{*}(V_{m}\otimes\rho)$ and
$G\in\mathcal{M}_{l-n}^{*}(V_{n}\otimes\eta)$ with product $H=F \otimes G$ in
$\mathcal{M}_{k+l-m-n}^{*}(V_{m+n}\otimes\rho\otimes\eta)$, the equalities
\[i_{m}(F) \otimes G=i_{m+n}(H)=F \otimes i_{n}(G)\] hold in
$\mathcal{M}_{k+l-m-n-1}^{*}(V_{m+n+1}\otimes\rho\otimes\eta)$. Hence in the
direct limit we obtain a well-defined (tensor) product map from
$\mathcal{M}_{k-\infty}^{*}(V_{\infty}\otimes\rho)\otimes\mathcal{M}_{l-\infty}^
{*}(V_{\infty}\otimes\eta)$ to
$\mathcal{M}_{k+l-\infty}^{*}(V_{\infty}\otimes\rho\otimes\eta)$. As shown
above, this product map corresponds to the usual tensor of quasi-modular forms.
In particular, for integral $k$ and $l$ (or half-integral if $\Gamma$ is a
subgroup of $Mp_{2}(\mathbb{R})$) with trivial $\rho$ and $\eta$, we have a
well-defined product map, under which the vector space
$\bigoplus_{k}\mathcal{M}_{k-\infty}^{*}(\Gamma,V_{\infty})$ becomes a ring.
Theorem \ref{rels} and the following discussion show that this ring is
isomorphic to the ring $\bigoplus_{k}\widetilde{\mathcal{M}}_{k}^{*}(\Gamma)$ of
quasi-modular forms (with various analytic properties) with a trivial
representation with respect to $\Gamma$.

\subsection{Holomorphic Modular Forms with Multiplier Systems Involving $V_{m}$}

We now turn our attention to holomorphic and meromorphic forms with multiplier
systems involving $V_{m}$. According to Theorem \ref{rels} these objects are
analogous to holomorphic and meromorphic quasi-modular forms.

The spaces $\mathcal{M}_{k-m}^{*}(V_{m}\otimes\rho)$ are most intrinsically
described using the filtration arising from the maps $i_{m}$. For any $p \leq
m$, denote $\mathcal{M}_{k-m}^{*,p}(V_{m}\otimes\rho)$ the subspace of
$\mathcal{M}_{k-m}^{*}(V_{m}\otimes\rho)$ consisting of those elements whose
expansion as in Equation \eqref{holwt} involves non-zero coefficients $F_{s}$
only for $s \leq p$ (equivalently, in the expansion of Equation \eqref{exp10}
only non-zero terms $f_{r}$ with $r \leq p$ appear). Here we use an increasing
filtration, unlike the decreasing filtration of \cite{[Ve]} and others (though
this is essentially the same filtration). Multiple applications of Corollary
\ref{VmVm+1} show that $\mathcal{M}_{k-m}^{*,p}(V_{m}\otimes\rho)$ is the image
of $\mathcal{M}_{k-p}^{*}(V_{p}\otimes\rho)$ under $i_{m-1}\circ\ldots \circ
i_{p}$. The map $i_{m}$ takes $\mathcal{M}_{k-m}^{*,p}(V_{m}\otimes\rho)$
isomorphically onto $\mathcal{M}_{k-m-1}^{*,p}(V_{m+1}\otimes\rho)$ (for $p=m$
this is Corollary \ref{VmVm+1} again), which allows us to define
$\mathcal{M}_{k-\infty}^{*,p}(V_{\infty}\otimes\rho)$ in the direct limit. Now,
Theorem \ref{rels} implies that for an element of
$\mathcal{M}_{k-m}^{*,p}(V_{m}\otimes\rho)$ the coefficient $F_{p}$ in Equation
\eqref{holwt} and the coefficient $f_{p}$ in Equation \eqref{exp10} coincide,
and this common function lies in $\mathcal{M}_{k-2p}^{*}(\rho)$. Hence the space
$\mathcal{M}_{k-m}^{*,p}(V_{m}\otimes\rho)/\mathcal{M}_{k-m}^{*,p-1}(V_{m}
\otimes\rho)$ injects into $\mathcal{M}_{k-2p}^{*}(\rho)$. Moreover, $i_{m}$
defines an isomorphism between the latter quotient space and
$\mathcal{M}_{k-m-1}^{*,p}(V_{m+1}\otimes\rho)/\mathcal{M}_{k-m-1}^{*,p-1}(V_{
m+1}\otimes\rho)$, and the two injections commute with this isomorphism. Hence
we identify these isomorphic quotients, and use the notation
$\iota_{k,\rho}^{*,p}$ (which does not involve $m$) for this injection. We can
consider $\iota_{k,\rho}^{*,p}$ as defined on the direct limit
$\mathcal{M}_{k-\infty}^{*,p}(V_{\infty}\otimes\rho)/\mathcal{M}_{k-\infty}^{*,
p-1}(V_{\infty}\otimes\rho)$.

\medskip

We can now state the structure theorem for
$\mathcal{M}_{k-m}^{*}(V_{m}\otimes\rho)$ as well as for the direct limit
$\mathcal{M}_{k-\infty}^{*}(V_{\infty}\otimes\rho)$:

\begin{thm}
The map $\iota_{k,\rho}^{*,p}$ is an isomorphism in all cases, except for the
case where $\Gamma$ has no cusps, $*=hol$ (or equivalently $*=cusp=wh$), and the
weight $k$ is an integer between $p+1$ and $2p$ (hence $p>0$). Assuming that
$\Gamma$ has no cusps and $*=hol$, if $\rho$ is a representation factoring
though a finite quotient of $\Gamma$, then the only case where
$\iota_{k,\rho}^{*,p}$ is not an isomorphism is where $k=2p>0$ and
the representation $\rho$ contains a trivial factor. In this case
$\iota_{k,\rho}^{*,p}=0$ but maps into a non-zero space. \label{Vmmodfilt}
\end{thm}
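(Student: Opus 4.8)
Since $\iota_{k,\rho}^{*,p}$ is injective by its construction, the entire theorem is a statement about surjectivity: one must decide, for each $g\in\mathcal{M}_{k-2p}^{*}(\rho)$, whether $g$ occurs as the leading coefficient $F_p=f_p$ of some element of $\mathcal{M}_{k-m}^{*,p}(V_m\otimes\rho)$. For $*=an$ (and identically $*=sing$) this is immediate: by the weight bookkeeping preceding Proposition~\ref{Vmcompmod} the function $\frac{g(\tau)}{(-2iy)^p}\binom{\overline{\tau}}{1}^p$ lies in $\mathcal{M}_{k-p}^{an,p}(V_p\otimes\rho)$, and its expansion \eqref{holwt} has $F_p=g$ with all lower $F_s=0$, so $\iota_{k,\rho}^{an,p}$ is onto. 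The remaining superscripts are holomorphic in nature, where this antiholomorphic device is unavailable.

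For the holomorphic-type cases the plan is to use the weight-raising operator $\delta_k=\frac{\partial}{\partial\tau}+\frac{k}{2iy}$ of the $\mathfrak{sl}_2$-discussion as the depth-raising mechanism. A short computation of leading terms shows that $\delta_w$ carries an almost holomorphic form of weight $w$ and depth $d$ with top coefficient $h_d$ to one of weight $w+2$ and depth $d+1$ with top coefficient $(w-d)h_d$; iterating $p$ times on $g\in\mathcal{M}_{k-2p}^{hol}(\rho)$ produces $\delta^p g$ whose top coefficient is $C\cdot g$ with $C=\prod_{i=0}^{p-1}(k-2p+i)=(k-2p)(k-2p+1)\cdots(k-p-1)$ (dually, $\iota_{k,\rho}^{hol,p}=\tfrac1{p!}L^p$ and $L^p\delta^p g=p!\,C\,g$). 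Hence whenever $C\neq0$ the form $\tfrac1C\delta^p g$ is a preimage of $g$, giving surjectivity; and $C$ is nonzero for every non-integral weight and for every integer $k\notin\{p+1,\dots,2p\}$, since the factors of $C$ vanish exactly when $k$ is an integer in that range.

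The degenerate range $k\in\{p+1,\dots,2p\}$ (where $C=0$) is where the analysis splits according to whether $\Gamma$ has cusps. If $\Gamma$ has a cusp, I would fix a holomorphic weight-$2$, depth-$1$ quasi-modular form $E$ (an $E_2$-type Eisenstein series attached to the cusp) whose leading function is a nonzero constant; since Theorem~\ref{rels} respects tensor products and the leading function of a product is the product of the leading functions, $E^p\cdot g$ is holomorphic of weight $k$ and depth exactly $p$ with top coefficient a nonzero multiple of $g$, establishing surjectivity for all holomorphic-type superscripts. Replacing $E$ by the logarithmic derivative of any meromorphic modular form—a meromorphic weight-$2$, depth-$1$ quasi-modular form that exists on every Fuchsian group—runs the same argument for $*=mer$ irrespective of cusps, which is why $mer$ never appears among the exceptions.

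The one genuinely hard case, and the main obstacle, is $\Gamma$ cocompact with $*=hol$ (equivalently $cusp=wh$) and $C=0$, where no holomorphic $E$ exists. Here the plan is to prove the structural identity that for cocompact $\Gamma$ every holomorphic quasi-modular form of weight $k$ and depth $\le p$ is a sum $\sum_{j=0}^{p}\delta^j g_j$ of iterated raises of holomorphic forms $g_j\in\mathcal{M}_{k-2j}^{hol}(\rho)$. I would obtain this by induction on the depth: the lowering operator $L$ has kernel exactly $\mathcal{M}_k^{hol}(\rho)$ on almost holomorphic forms, and the inductive step reduces to the surjectivity of $L$ onto depth $\le p-1$, which holds because for cocompact $\Gamma$ the almost holomorphic forms are precisely the $\mathfrak{sl}_2$-module generated by the holomorphic (lowest-weight) vectors, with no extra $E_2$-type generator to supply new depth. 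Pinning down this lifting—i.e. the surjectivity of $L$ in the cocompact case—is the crux. Granting the identity, only the summand $\delta^p\mathcal{M}_{k-2p}^{hol}(\rho)$ reaches depth $p$, so $\iota_{k,\rho}^{hol,p}$ has image $C\cdot\mathcal{M}_{k-2p}^{hol}(\rho)=0$ and therefore vanishes while its target is in general non-zero. Finally, when $\rho$ factors through a finite quotient, holomorphic forms of negative weight vanish, so $\mathcal{M}_{k-2p}^{hol}(\rho)=0$ throughout the degenerate range except at $k=2p$ (weight $0$), where the target is the trivial-isotypic part of $\rho$; this isolates the unique failure $k=2p>0$ with $\rho$ containing a trivial factor, where $\iota_{k,\rho}^{hol,p}=0$ with non-zero range, exactly as claimed.
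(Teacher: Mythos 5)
Your opening reduction is correct ($\iota_{k,\rho}^{*,p}$ is injective by construction, so everything is about surjectivity), and three of your cases track the paper's own proof almost exactly: the antiholomorphic construction $\frac{g}{(-2iy)^{p}}\binom{\overline{\tau}}{1}^{p}$ for $*=an$ and $*=sing$ (the paper gets this directly from Proposition \ref{Vmcompmod}); the iterated raising operators producing the preimage $\frac{1}{C}\delta^{p}g$ whenever $C=\prod_{j=1}^{p}(k-p-j)\neq0$ (the paper phrases the same computation via $f^{(p)}\in\widetilde{\mathcal{M}}_{k}^{mer,\leq p}(\rho)$ and Theorem \ref{rels}, and your constant $C$ agrees with the paper's); and multiplication by a weight-$2$, depth-$1$ quasi-modular form with constant leading term---holomorphic when $\Gamma$ has cusps, a normalized logarithmic derivative for $*=mer$---which is precisely the paper's $\varphi$, the vector $w$ of Equation \eqref{wdef}, and Proposition \ref{Vmholmod}.

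The genuine gap is the case you yourself flag as the crux: cocompact $\Gamma$, $*=hol$, $k=2p>0$. Your justification of the structural identity there (``for cocompact $\Gamma$ the almost holomorphic forms are precisely the $\mathfrak{sl}_{2}$-module generated by the holomorphic lowest-weight vectors, with no extra $E_{2}$-type generator'') is not an argument but a restatement of the conclusion: it is exactly the assertion that $L$ is surjective in the relevant degrees, equivalently that nothing of depth $p$ and weight $2p$ exists. No formal $\mathfrak{sl}_{2}$ manipulation can yield this, because the statement is \emph{false} for groups with cusps ($E_{2}$ exists for $SL_{2}(\mathbb{Z})$); some input specific to compactness of $Y(\Gamma)$ is indispensable. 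Indeed, if you carry out your proposed induction on depth, every lifting step goes through formally except the bottom one, which is precisely the statement $\widetilde{\mathcal{M}}_{2}^{hol,\leq1}(\Gamma)=\mathcal{M}_{2}^{hol}(\Gamma)$ for cocompact $\Gamma$---Lemma 4 of \cite{[KS]}, Theorem 4 of \cite{[A]}---a nontrivial theorem you neither prove nor cite. The paper sidesteps the full structural identity: given a hypothetical $F$ with $\iota_{2p,\rho}^{hol,p}(F)\neq0$, it applies a linear functional $\xi$ killing the complement of $V_{\rho}^{triv}$, so that $\xi(F)$ is scalar-valued with $f_{p}$ a nonzero constant, and then invokes Lemma \ref{qmodcomp} to conclude that $f_{p-1}$ is a holomorphic weight-$2$ quasi-modular form of depth exactly $1$, contradicting the cited theorem. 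If you insert that reduction (or an independent proof of the weight-$2$ fact for cocompact groups), your outline closes up into a complete proof; as written, the decisive case rests on a circular claim.
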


The claim about cusp forms with trivial representation (for even $k$ and $m$)
appears in \cite{[Ve]}, and its proof (at least for weights 0 and 2) is given in
\cite{[KS]}. Here, we generalize it for all the various cases, and present
another proof for the case where $\Gamma$ has cusps. This new proof covers
certain situations to which the proof from \cite{[KS]} does not apply.

\smallskip

Before proving Theorem \ref{Vmmodfilt}, we introduce another basis for the
representation space of $V_{m}$ in some cases. Let $\varphi$ be an element of
$\widetilde{\mathcal{M}}_{2}^{mer,\leq1}(\Gamma)$ whose corresponding functions
in Equation \eqref{qmod} are $f_{0}=\varphi$ and $f_{1}=1$. Any normalized
logarithmic derivative of a (meromorphic) modular form of non-zero weight can be
taken as $\varphi$. In fact, Theorem 9 of \cite{[A]} shows that we can take
$\varphi$ with a unique simple pole on $Y(\Gamma)$ at any pre-fixed point in
$Y(\Gamma)$. Then the function $\varphi^{*}(\tau)=\varphi(\tau)+\frac{1}{2iy}$
lies in $\mathcal{M}_{2}^{sing}(\Gamma)$ (by Proposition \ref{qmodtomod}, for
example), and Theorem \ref{rels} shows that
\begin{equation}
w(\tau)=\varphi(\tau)\binom{\tau}{1}+\binom{1}{0}=\varphi^{*}(\tau)\binom{\tau
}{1}+\frac{1}{-2iy}\binom{\overline{\tau}}{1} \label{wdef}
\end{equation}
is in $\mathcal{M}_{1}^{mer}(\Gamma,V_{1})$. Using the basis $\binom{\tau}{1}$
and $w$ for $V_{1}$, an argument similar to Proposition \ref{Vmcompmod} proves
the following

\begin{prop}
An element $F\in\mathcal{M}_{k-m}^{sing}(V_{m}\otimes\rho)$ of weight $k$ and
representation $V_{m}\otimes\rho$ decomposes as
\begin{equation}
F(\tau)=\sum_{t=0}^{m}f_{t}^{w}(\tau)\binom{\tau}{1}^{m-t}w^{t}, \label{Vmw}
\end{equation}
where $f_{t}^{w}\in\mathcal{M}_{k-2t}^{sing}(\rho)$ for every $0 \leq t \leq m$.
Conversely, if $f_{t}^{w}\in\mathcal{M}_{k-2t}^{sing}(\rho)$ for every such $t$
then the function $F$ defined in Equation \eqref{Vmw} lies in
$\mathcal{M}_{k-m}^{sing}(V_{m}\otimes\rho)$. In addition,
$F\in\mathcal{M}_{k-m}^{mer}(V_{m}\otimes\rho)$ if and only if
$f_{t}^{w}\in\mathcal{M}_{k-2t}^{mer}(\rho)$ for all $t$. \label{Vmholmod}
\end{prop}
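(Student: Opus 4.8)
The plan is to mimic the proof of Proposition~\ref{Vmcompmod}, replacing the real-analytic basis $\binom{\tau}{1},\binom{\overline{\tau}}{1}$ of $V_1$ by the meromorphic basis $\binom{\tau}{1},w$. First I would record that these two vectors are everywhere (where $\varphi$ is finite) linearly independent: writing $w=\varphi\binom{\tau}{1}+\binom{1}{0}$ as the column $\binom{\varphi\tau+1}{\varphi}$, the determinant of the matrix with columns $\binom{\tau}{1}$ and $w$ is the constant $\tau\varphi-(\varphi\tau+1)=-1$. Hence the symmetric products $\binom{\tau}{1}^{m-t}w^{t}$, $0\le t\le m$, form a basis of $V_m$ at each such point, and every $V_m\otimes V_\rho$-valued function $F$ admits a unique expansion as in Equation~\eqref{Vmw} with $V_\rho$-valued coefficients $f_t^w$.

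The second step is to compute how the basis vectors transform. Since $\binom{\tau}{1}\in\mathcal{M}_{-1}^{hol}(\Gamma,V_1)$ and $w\in\mathcal{M}_1^{mer}(\Gamma,V_1)$, and the symmetric product $V_{m-t}\otimes V_t\to V_m$ is $\gamma$-equivariant, the product $\binom{\tau}{1}^{m-t}w^t$ is a modular form of weight $2t-m$ and representation $V_m$; explicitly $\binom{\gamma\tau}{1}^{m-t}w(\gamma\tau)^t=j(\gamma,\tau)^{2t-m}\gamma^m\big(\binom{\tau}{1}^{m-t}w^t\big)$. Now, given $F\in\mathcal{M}_{k-m}^{sing}(V_m\otimes\rho)$ expanded as in Equation~\eqref{Vmw}, I would write $F(\gamma\tau)$ in two ways---once by substituting $\gamma\tau$ and transforming the basis vectors, once using the modularity $F(\gamma\tau)=j(\gamma,\tau)^{k-m}(\gamma^m\otimes\rho(\gamma))F(\tau)$---and compare the coefficients of $\gamma^m\big(\binom{\tau}{1}^{m-t}w^t\big)$. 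This yields $f_t^w(\gamma\tau)=j(\gamma,\tau)^{k-2t}\rho(\gamma)f_t^w(\tau)$, the functional equation placing $f_t^w$ in weight $k-2t$ and representation $\rho$. The converse is then immediate from the tensor-product property of modular forms recalled in Section~\ref{MQM}: each summand $f_t^w\binom{\tau}{1}^{m-t}w^t$ is the product of an element of $\mathcal{M}_{k-2t}^{sing}(\rho)$ with the weight-$(2t-m)$, representation-$V_m$ basis form, hence lies in $\mathcal{M}_{k-m}^{sing}(V_m\otimes\rho)$, and so does their sum.

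The last point, requiring the most care, is matching the analytic type. Expanding $w^t=\big(\varphi\binom{\tau}{1}+\binom{1}{0}\big)^t$ by the Binomial Theorem shows that the basis $\binom{\tau}{1}^{m-t}w^t$ is related to the holomorphic basis $\binom{\tau}{1}^{m-r}\binom{1}{0}^r$ of Equation~\eqref{exp10} by a unipotent (triangular, unit-diagonal) transformation whose entries are polynomials in $\varphi$. Consequently the coefficients $f_t^w$ are obtained from the coefficients in the expansion~\eqref{exp10} of $F$ by an invertible transformation introducing no singularities beyond the poles of $\varphi$, which form a discrete $\Gamma$-invariant set. This simultaneously gives the $sing$ statement (so $f_t^w\in\mathcal{M}_{k-2t}^{sing}(\rho)$ when $F\in\mathcal{M}^{sing}$) and the $mer$ refinement, since $w$ is meromorphic with poles exactly at those of $\varphi$, so that $F$ is meromorphic precisely when every $f_t^w$ is. I expect this bookkeeping, rather than any transformation computation, to be the only genuine obstacle: the fact that the basis change is unipotent (equivalently, that $\binom{\tau}{1}$ and $w$ have constant nonzero determinant) is exactly what prevents the meromorphic vector $w$ from creating spurious singularities in either direction.
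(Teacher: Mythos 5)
Your proposal is correct and follows essentially the same route as the paper: the modularity equivalence is obtained exactly as in Proposition \ref{Vmcompmod} (expanding $F(\gamma\tau)$ in two ways in the transformed basis and comparing coefficients), and the analytic-type equivalence comes from the meromorphy of the basis vectors $\binom{\tau}{1}^{m-t}w^{t}$. Your explicit determinant computation and the unipotent change of basis to the $\binom{\tau}{1}^{m-r}\binom{1}{0}^{r}$ frame are just a more detailed justification of what the paper states in one line.
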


\begin{proof}
The equivalence of the modularity of $F$ and the modularity of the functions
$f_{t}^{w}$ is established as in the proof of Proposition \ref{Vmcompmod}. The
equivalence of the meromorphicity of $F$ and the meromorphicity of the functions
$f_{t}^{w}$ follows from the fact that the basis vectors
$\binom{\tau}{1}^{m-t}w^{t}$ are meromorphic functions of $\tau\in\mathcal{H}$.
This proves the proposition.
\end{proof}

Assume further that $\varphi\in\widetilde{\mathcal{M}}_{2}^{hol,\leq1}(\Gamma)$,
hence $w\in\mathcal{M}_{1}^{hol}(\Gamma,V_{1})$. Then the proof of Proposition
\ref{Vmholmod} implies the equivalence of
$F\in\mathcal{M}_{k-m}^{*}(V_{m}\otimes\rho)$ and
$f_{p}^{w}\in\mathcal{M}_{k-2p}^{*}(\rho)$ for all $p$ also for $*$ being one of
the superscripts $an$, $hol$, $cusp$, and $wh$. Hence every choice of such
$\varphi$ (and $w$) defines an isomorphism
\begin{equation}
\mathcal{M}_{k-m}^{*}(V_{m}\otimes\rho)\cong\bigoplus_{t=0}^{m}\mathcal{M}_{k-2t
}^{*}(\rho).
\label{Vmiso}
\end{equation}
The existence of $w\in\mathcal{M}_{1}^{hol}(\Gamma,V_{1})$ which is of the form
given in Equation \eqref{wdef} is equivalent to the existence of
$\varphi\in\widetilde{\mathcal{M}}_{2}^{hol,\leq1}(\Gamma)$ with the
corresponding function $f_{1}=1$. Such a quasi-modular form always exists if
$\Gamma$ has cusps. Theorem 4 of \cite{[A]} proves this assertion using
generalized Eisenstein series, though a more conceptual proof of this assertion
can be obtained using Serre duality: The filtration on
$\mathcal{M}_{1}^{hol}(\Gamma,V_{1})$ leads to a short exact sequence
\[0\to\mathcal{L}_{2}\to\mathcal{V}_{1,1}\to\mathcal{L}_{0}\to0\] of vector
bundles on $X(\Gamma)$ (where $\mathcal{L}_{k}$ is the line bundle corresponding
to modular forms of weight $k$ and $\mathcal{V}_{k,m}$ is the vector bundle of
rank $m+1$ corresponding to modular forms of weight $k$ and representation
$V_{m}$). $H^{1}\big(X(\Gamma),\mathcal{L}_{0}\big)$ is the space dual to
$H^{0}\big(X(\Gamma),\Omega^{1}=\mathcal{L}_{2}\otimes\mathcal{L}_{cusp}^{*}
\big)$, namely to $\mathcal{M}_{0}^{cusp}(\Gamma)$, and the latter space is 0 if
$\Gamma$ has cusps. Therefore, the corresponding sequence of global sections,
which is
\begin{equation}
0 \to \mathcal{M}_{2}^{hol}(\Gamma)\stackrel{i_{0}}{\to}
\mathcal{M}_{1}^{hol}(V_{1})\stackrel{\iota_{1,triv}^{hol,1}}{\to}\mathbb{C}\to0
\label{wto1}
\end{equation}
(where $triv$ is the trivial representation), is exact, and $w$ is any pre-image
of 1 under $\iota_{1,triv}^{hol,1}$. In the classical case of subgroups of
$SL_{2}(\mathbb{Z})$ we can take $\varphi$ to be $\frac{\pi i}{6}E_{2}$, a
multiple of the weight 2 quasi-modular Eisenstein series. Hence
$\varphi^{*}=\frac{\pi i}{6}\mathbf{E}_{2}$ is a multiple of the almost
holomorphic weight 2 Eisenstein series. On the other hand, if $\Gamma$ has no
cusps then this proof fails (since $H^{1}\big(X(\Gamma),\mathcal{L}_{0}\big)$ is
no longer trivial), and indeed
$\widetilde{\mathcal{M}}_{2}^{hol}(\Gamma)=\mathcal{M}_{2}^{hol}(\Gamma)$ in
this case (see Lemma 4 of \cite{[KS]} or Theorem 4 of \cite{[A]}).

\begin{proof}[Proof of Theorem \ref{Vmmodfilt}]
The assertion for $*=an$ and for $*=sing$ follows directly from Proposition
\ref{Vmcompmod}. Hence we restrict attention to meromorphic modular forms. Let
$f\in\mathcal{M}_{l}^{mer}(\rho)$. The operator
$\delta_{l}=\frac{\partial}{\partial\tau}+\frac{l}{2iy}$ takes $f$ to a modular
form of weight $l+2$, and the composition of $p$ consecutive such operators
takes $f$ to the element
\[\delta_{l}^{p}f(\tau)=\delta_{l+2p-2}\circ\ldots\circ\delta_{l}f(\tau)=\sum_{
r=0}^{p}\binom{p}{r}\prod_{j=1}^{r}(l+p-j)\frac{f^{(p-r)}(\tau)}{(2iy)^{r}}\] of
$\mathcal{M}_{l+2p}^{sing}(\rho)$. By Proposition 135 of \cite{[MR]}, the
operators $\delta_{l+2h}$, $0 \leq h \leq p-1$ commute with the ordinary
derivative $\frac{\partial}{\partial\tau}$ and the maps from Proposition
\ref{qmodtomod}. Thus, the $p$th derivative $f^{(p)}$ of $f$ is in
$\widetilde{\mathcal{M}}_{l+2p}^{mer,\leq p}(\rho)$ with the functions $f_{r}$
being $\binom{p}{r}\prod_{j=1}^{r}(l+p-j)f^{(p-r)}$ for each $0 \leq r \leq p$.
Theorem \ref{rels} now shows that
\[F(\tau)=\sum_{r=0}^{p}\binom{p}{r}\prod_{j=1}^{r}(l+p-j) \cdot
f^{(p-r)}(\tau)\binom{\tau}{1}^{m-r}\binom{1}{0}^{r}\] is in
$\mathcal{M}_{l+2p-m}^{mer}(V_{m}\otimes\rho)$ for any $m \geq p$. Take
$l=k-2p$, and assume first that none of the numbers $k-p-j$, $1 \leq j \leq p$,
vanish. Then the element $\frac{F}{\prod_{j=1}^{p}(k-p-j)}$ of
$\mathcal{M}_{k-m}^{mer,p}(V_{m}\otimes\rho)$ maps to $f$ under
$\iota_{k,\rho}^{mer,p}$. By replacing $mer$ by $hol$, $cusp$, or $wh$
(and $sing$ by $an$) the proof for any weight $k$ except for integral $k$
satisfying $p+1 \leq k \leq 2p$ is complete. In particular, the case $p=0$ is
completely covered by this proof, since there exists no $k$ satisfying $p+1 \leq
k \leq 2p$ in this case. This argument also covers the case $*=cusp$ if $\Gamma$
has cusps and $\rho$ factors through a finite quotient, since if $0 \neq
f\in\mathcal{M}_{k-2p}^{cusp}(\rho)$ in this case then $k>2p$. For the cases
$k=m$ and $k=m+2$ (with a trivial representation) this argument reproduces the
proof of \cite{[KS]}, since the matrix denoted $L_{m}(\tau)$ in that reference
maps $\binom{0}{1}^{m_{+}}\binom{1}{0}^{m_{-}}$ to
$\binom{\tau}{1}^{m_{+}}\binom{1}{0}^{m_{-}}$ as the $m$th
symmetric power of $\binom{1\ \ \tau}{0\ \ 1}$.

For the other cases we use Proposition \ref{Vmholmod}, which immediately proves
the case $*=mer$ for any group $\Gamma$. Moreover, given
$f\in\mathcal{M}_{k-2p}^{mer}(\rho)$ which is not holomorphic on $\mathcal{H}$,
Theorem 9 of \cite{[A]} allows us to choose an element
$\varphi\in\widetilde{\mathcal{M}}_{2}^{mer,\leq1}(\Gamma)$ having poles only at
the poles of $f$. In this case Equation \eqref{Vmw} yields a meromorphic
$\iota_{k,\rho}^{mer,p}$-pre-image of $f$ whose poles are only at the poles of
$f$. If $\Gamma$ has cusps then we choose holomorphic $\varphi$ and $w$ in
Equation \eqref{Vmw}. Then Proposition \ref{Vmholmod} and the additional
equivalences arising from this choice of $\varphi$ complete the proof for the
case of $\Gamma$ with cusps.

It remains to consider the image of $\iota_{k,\rho}^{*,p}$ for $*=hol=cusp=wh$
in the case where $\Gamma$ has no cusps and $k$ is an integer between $p+1$ and
$2p$. We first observe that $\rho$ is a representation (not a multiplier system)
since $k$ is an integer. Assuming that $\rho$ factors through a finite quotient
of $\Gamma$, we have $\mathcal{M}_{k-2p}^{hol}(\rho)=0$ for $k<2p$ (since the
weight is negative), and for $k=2p$ the space $\mathcal{M}_{k-2p=0}^{hol}(\rho)$
consists of constant functions. A constant in $V_{\rho}$ lies in
$\mathcal{M}_{0}^{hol}(\rho)$ if and only if it lies in the maximal subspace
$V_{\rho}^{triv}$ of $V_{\rho}$ on which $\rho$ acts trivially. Therefore all we
need to show is that if $p>0$, $k=2p$, and the space $V_{\rho}^{triv}$ is
non-trivial, then $\iota_{k,\rho}^{*,p}$ is still 0. Consider
$F\in\mathcal{M}_{2p-m}^{hol,p}(V_{m}\otimes\rho)$ (with $m \geq p$), and assume
that $\iota_{2p,\rho}^{*,p}(F)\neq0$ in
$\mathcal{M}_{0}^{hol}(\rho)=V_{\rho}^{triv}$. Let $\xi$ be a linear functional
on $V_{\rho}^{triv}$ which does not vanish on $\iota_{2p,\rho}^{*,p}(F)$ but
vanishes on the complement of $V_{\rho}^{triv}$ in $V_{\rho}$ (this is possible
since $\rho$ is essentially a representation of a finite group). Then
$\xi(F)\in\mathcal{M}_{2p-m}^{hol,p}(\Gamma,V_{m})$, and when we write $\xi(F)$
as in Equation \eqref{exp10}, the function $f_{p}$ is a non-zero constant
complex number. Theorem \ref{rels} and Lemma \ref{qmodcomp} then imply that
$f_{p-1}$ is an element of
$\widetilde{\mathcal{M}}_{2}^{hol}(\Gamma)\setminus\mathcal{M}_{2}^{hol}
(\Gamma)$ (here we use the assumption that $p>0$), in contradiction to Theorem 4
of \cite{[A]}. This shows that $\iota_{2p,\rho}^{*,p}=0$ in this case, which
completes the proof of the theorem.
\end{proof}

Note that the case $p=0$ in Theorem \ref{Vmmodfilt} reduces to the assertion
that $\iota_{k,\rho}^{*,0}$ defines an isomorphism between the space
$\mathcal{M}_{k-m}^{*,0}(V_{m}\otimes\rho)$ for any finite $m$, or equivalently
$\mathcal{M}_{k-\infty}^{*,0}(V_{\infty}\otimes\rho)$, and the space
$\mathcal{M}_{k}^{*}(\rho)$ for every $k$, $\Gamma$, $\rho$, and analytic type
$*$. Indeed, this is the isomorphism inverse to $i_{m-1}\circ\ldots \circ i_{0}$
(or to the direct limit map).

Consider now the case where $\Gamma$ has cusps. The proof of Theorem
\ref{Vmmodfilt} shows that a more general assertion is valid in this case: For
any cuspidal divisor $D$ on $X(\Gamma)$, the restriction of the map
$\iota_{k,\rho}^{wh,p}$ to modular forms whose poles are bounded by $D$ is an
isomorphism onto the subspace of $\mathcal{M}_{k-2p}^{wh}(\rho)$ consisting of
those modular forms whose polar divisor is bounded by $D$. Observe that the
degree of freedom in the choice of
$\varphi\in\widetilde{\mathcal{M}}_{2}^{hol,\leq1}(\Gamma)$ (hence $w$) in this
case corresponds to the addition of an element
$h\in\mathcal{M}_{2}^{hol}(\Gamma)$ to $\varphi$ (hence adding
$h\binom{\tau}{1}$ to $w$)---see Equation  \eqref{wto1}, for example. In
particular, if $\mathcal{M}_{2}^{hol}(\Gamma)=0$  (this is the situation, for
example, when $\Gamma=SL_{2}(\mathbb{Z})$), then $\varphi$ and $w$ are unique,
and the decomposition of $\mathcal{M}_{k-m}^{*}(V_{m}\otimes\rho)$ into
$\bigoplus_{p=0}^{m}\mathcal{M}_{k-2p}^{*}(\rho)$, given by Equations
\eqref{Vmw} and \eqref{Vmiso}, is canonical. Returning to the general case with
$\varphi\in\widetilde{\mathcal{M}}_{2}^{mer,\leq1}(\Gamma)$, we remark that
$\mathcal{M}_{k-m}^{sing,p}(V_{m}\otimes\rho)$ and
$\mathcal{M}_{k-m}^{mer,p}(V_{m}\otimes\rho)$ consist of those elements $F$ for
which the decomposition in Equation \eqref{Vmw} involves non-zero functions
$f_{t}^{w}$ only for $t \leq p$. The same assertion holds for the other spaces
$\mathcal{M}_{k-m}^{*}(V_{m}\otimes\rho)$ if
$\varphi\in\widetilde{\mathcal{M}}_{2}^{hol,\leq1}(\Gamma)$, and extends to
$\mathcal{M}_{k-\infty}^{*}(V_{\infty}\otimes\rho)$.

\medskip

We recall that if the dimension of $V_{\rho}$ is finite then the spaces
$\mathcal{M}_{k}^{hol}(\rho)$ are finite-dimensional. Indeed, if $k$ is integral
(or half-integral for $\Gamma \subseteq Mp_{2}(\mathbb{R})$) and $\rho$ is a
representation, then $\mathcal{M}_{k}^{hol}(\rho)$ is the space of global
sections of a vector bundles (of finite rank) over the compact Riemann surface
$X(\Gamma)$. For the case of multiplier systems see, for example, Proposition 9
of \cite{[KM]}. If $\Gamma$ has cusps then the same assertion holds for the
spaces $\mathcal{M}_{k}^{cusp}(\rho)$, and more generally to every subspace of
$\mathcal{M}_{k}^{wh}(\rho)$ defined by a bound on the polar divisor. Hence
Theorem \ref{Vmmodfilt} has the following

\begin{cor}
If $k$ is not an integer between 1 and $2m$ then
\begin{equation}
\dim\mathcal{M}_{k-m}^{hol}(V_{m}\otimes\rho)=\sum_{t=0}^{m}\dim\mathcal{M}_{
k-2t}^{hol}(\rho).
\label{dimeq}
\end{equation}
If $\Gamma$ has cusps then Equation \eqref{dimeq} holds in general, as well as
the corresponding assertion for the spaces $\mathcal{M}^{cusp}$ and for
subspaces of $\mathcal{M}^{wh}$ in which the polar divisor is bounded by a fixed
cuspidal divisor. If $\Gamma$ has no cusps, $k$ is an integer between 1 and
$2m$, and the representation $\rho$ factors through a finite quotient of
$\Gamma$, then Equation \eqref{dimeq} still holds if $k$ is odd. If $k=2p$ for
some $0<p \leq m$ then
\[\dim\mathcal{M}_{2p-m}^{hol}(V_{m}\otimes\rho)=\sum_{t=0}^{p-1}\dim\mathcal{M}
_{2p-2t}^{hol}(\rho),\]
and the right hand side of Equation \eqref{dimeq} is obtained from this common
value by adding $\dim\mathcal{M}_{0}^{hol}(\rho)=\dim V_{\rho}^{triv}$.
\end{cor}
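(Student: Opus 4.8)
The plan is to deduce every one of these dimension identities from the increasing filtration $\{\mathcal{M}_{k-m}^{hol,p}(V_{m}\otimes\rho)\}_{p=-1}^{m}$ (with the $p=-1$ term understood as $0$ and the $p=m$ term being all of $\mathcal{M}_{k-m}^{hol}(V_{m}\otimes\rho)$) together with the case analysis of Theorem \ref{Vmmodfilt}. Since $V_{\rho}$ is finite-dimensional, all the spaces in sight are finite-dimensional, as recalled just before the corollary, so dimension is additive along the filtration:
\[\dim\mathcal{M}_{k-m}^{hol}(V_{m}\otimes\rho)=\sum_{p=0}^{m}\dim\big(\mathcal{M}_{k-m}^{hol,p}(V_{m}\otimes\rho)/\mathcal{M}_{k-m}^{hol,p-1}(V_{m}\otimes\rho)\big).\]
By the very definition of $\iota_{k,\rho}^{hol,p}$, the $p$th summand equals the dimension of the image of this injection inside $\mathcal{M}_{k-2p}^{hol}(\rho)$. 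Hence the whole corollary reduces to recording, level by level, whether $\iota_{k,\rho}^{hol,p}$ is an isomorphism (summand $=\dim\mathcal{M}_{k-2p}^{hol}(\rho)$) or the zero map (summand $=0$).

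For the first assertion I would observe that the exceptional condition of Theorem \ref{Vmmodfilt}, namely $k$ an integer with $p+1\leq k\leq 2p$, is met by no $0\leq p\leq m$ as soon as $k$ is not an integer in $[1,2m]$: an integer $k\leq 0$ satisfies $k<p+1$, while an integer $k>2m$ satisfies $k>2p$, for every such $p$. Thus each $\iota_{k,\rho}^{hol,p}$ is an isomorphism and the displayed sum is exactly the right-hand side of Equation \eqref{dimeq}. If $\Gamma$ has cusps then Theorem \ref{Vmmodfilt} has no exceptional case whatsoever, so the same computation yields \eqref{dimeq} for all $k$; running the identical argument with the $cusp$ filtration, or with the polar-divisor-bounded $wh$ filtration (using the refinement stated in the remark following Theorem \ref{Vmmodfilt}), gives the two variant identities.

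The substantive case is $\Gamma$ without cusps, $k$ an integer in $[1,2m]$, and $\rho$ factoring through a finite quotient. Here Theorem \ref{Vmmodfilt} tells us that the only genuine failure of $\iota_{k,\rho}^{hol,p}$ is at a level with $k=2p>0$ and $V_{\rho}^{triv}\neq 0$, where the map is $0$; at every other level it is an isomorphism. When $k$ is odd the equation $k=2p$ is insoluble, so no level fails and \eqref{dimeq} holds verbatim. When $k=2p$ is even I would use that, for finite $\rho$, the target $\mathcal{M}_{2p-2t}^{hol}(\rho)$ has negative weight and so vanishes for $t>p$ (as in the proof of Theorem \ref{Vmmodfilt}); hence the right-hand side of \eqref{dimeq} collapses to $\sum_{t=0}^{p}\dim\mathcal{M}_{2p-2t}^{hol}(\rho)$, whose top term is $\dim\mathcal{M}_{0}^{hol}(\rho)=\dim V_{\rho}^{triv}$. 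In the actual filtration this top term drops out, precisely because $\iota_{k,\rho}^{hol,p}=0$ at the level $p=k/2$, leaving $\dim\mathcal{M}_{2p-m}^{hol}(V_{m}\otimes\rho)=\sum_{t=0}^{p-1}\dim\mathcal{M}_{2p-2t}^{hol}(\rho)$ as claimed, and adding back $\dim V_{\rho}^{triv}$ recovers \eqref{dimeq}.

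The only delicate point, and the main if modest obstacle, is the bookkeeping that pairs the single vanishing level $p=k/2$ with the single term $\dim\mathcal{M}_{0}^{hol}(\rho)$ that disappears from the sum; this pairing hinges on the vanishing of $\mathcal{M}_{k-2t}^{hol}(\rho)$ in negative weight for finite $\rho$, so that no other terms interfere. Everything else is the additivity of dimension along the filtration and a direct reading of Theorem \ref{Vmmodfilt}.
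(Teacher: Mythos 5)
Your proof is correct and follows exactly the route the paper intends: the corollary is stated as an immediate consequence of Theorem \ref{Vmmodfilt}, obtained by summing dimensions along the filtration $\mathcal{M}_{k-m}^{hol,p}(V_{m}\otimes\rho)$, using the finite-dimensionality recalled just before the statement and reading off, level by level, whether $\iota_{k,\rho}^{hol,p}$ is an isomorphism, the zero map, or an injection into a vanishing (negative-weight) space. Your bookkeeping in the even-weight case $k=2p$, pairing the vanishing level $p=k/2$ with the missing term $\dim V_{\rho}^{triv}$, is precisely the content of the corollary's last assertion.
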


\noindent\textsc{Fachbereich Mathematik, AG 5, Technische Universit\"{a}t
Darmstadt, Schlossgartenstrasse 7, D-64289, Darmstadt, Germany}

\noindent E-mail address: zemel@mathematik.tu-darmstadt.de

\end{document}